\documentclass[12pt]{article}
\usepackage{fullpage,graphicx,psfrag,amsmath,amsfonts, verbatim}
\usepackage{mystyle}
\usepackage[small,bf]{caption}
\usepackage{xcolor}
\usepackage{amsthm}
\usepackage{hyperref}
\usepackage{upgreek}
\hypersetup{
    colorlinks=true,
    linkcolor=blue,
    filecolor=magenta,      
    urlcolor=cyan,
    pdftitle={Overleaf Example},
    pdfpagemode=FullScreen,
    }
\newtheorem{lemma}{Lemma}
\usepackage{amssymb}
\usepackage[]{mdframed}
\newtheorem*{thm*}{Theorem}
\newtheorem*{rem*}{Remark}
\newtheorem*{prop*}{Proposition}
\newtheorem*{lemma*}{Lemma}
\usepackage[left=2.4cm, right=2.4cm]{geometry}

\usepackage{mathtools} 

\renewcommand{\argmin}{\mathop{\mathrm{argmin}}}

\renewcommand{\argmax}{\mathop{\mathrm{argmax}}}

\newcommand{\diam}{\mathrm{diam}}

\newcommand{\as}{\alpha}
\newcommand{\bs}{\beta}
\newcommand{\pis}{\pi}
\newcommand{\cs}{c}
\newcommand{\KL}{\mathrm{KL}}

\newcommand{\gabriel}[1]{{\color{black}#1}}

\newtheorem{innerassumption}{Assumption}
\newenvironment{assumption}
  {\innerassumption}
  {\endinnerassumption}

\author{
\gabriel{Francisco Andrade}
\thanks{\gabriel{INRIA (PreMeDICaL \& HeKA), 
\texttt{francisco.de-lima-andrade@inria.fr}}}
\and
\gabriel{Gabriel Peyr\'e}
\thanks{\gabriel{CNRS and ENS, PSL Universit\'e,
\texttt{gabriel.peyre@ens.fr}}}
\and
\gabriel{Clarice Poon}
\thanks{\gabriel{Mathematics Institute, University of Warwick,
\texttt{clarice.poon@warwick.ac.uk}}}
}

\title{Sample complexity of unbalanced entropic OT}

\date{}
\begin{document}
\maketitle

\begin{abstract}
\gabriel{Optimal transport (OT) has become a central language for comparing
probability measures, but exact balanced OT is often both too rigid for data
with missing, created, or destroyed mass and subject to unfavorable high-dimensional sample complexity.  Entropic regularization and unbalanced relaxations address these
limitations in complementary ways. Entropy smooths the geometry, improves
statistical behavior, and enables fast Sinkhorn-type algorithms, while
unbalanced marginal penalties replace hard conservation constraints by
divergence terms adapted to noisy empirical data. This paper studies the
sample complexity of entropic unbalanced OT at the level of the optimal
coupling, rather than only the scalar transport value. We develop a
translation-invariant dual formulation, prove compactness and strong convexity
properties for the intrinsic dual variables, and convert these geometric
estimates into high-probability finite-sample bounds for empirical couplings.
The results clarify why regularization is a practical necessity in machine
learning applications: it softens the curse of dimensionality, reduces the
number of samples needed for stable transport estimation, and keeps the
resulting estimators compatible with scalable Sinkhorn-type solvers.}
\end{abstract}


\section{Introduction}
%

Among the various formulations of unbalanced optimal transport, entropic regularized models play a prominent role due to their computational tractability and smooth dual structure. In this setting, the primal problem admits a dual formulation in terms of generalized Kantorovich potentials, and the optimal coupling is expressed through an exponential scaling of the cost. While this structure closely parallels the balanced case, a crucial geometric difference emerges: the dual objective generally loses the translation (gauge) invariance that characterizes classical entropic OT.
In the balanced case, gauge freedom allows one to quotient out a flat direction in the dual, yielding a strictly convex objective on a suitable subspace. In contrast, for unbalanced OT,  the dual functional may fail to be strongly convex, and its geometry depends sensitively on the chosen divergences. As a result, controlling the stability of dual potentials and deriving statistical guarantees for empirical estimators becomes significantly more delicate.
The goal of this paper is to show that, under very mild and largely divergence-agnostic assumptions, one can recover a favorable geometric structure for the dual problem and leverage it to obtain finite-sample guarantees for unbalanced entropic OT for the transport plan. To the best of our knowledge, existing statistical guarantees for unbalanced entropic OT mostly concern the objective value (or related stability statements), rather than high-probability finite-sample bounds directly on the optimal coupling/plan. 

\subsection{Previous works}

\paragraph{\gabriel{Entropic optimal transport.}}
\gabriel{Optimal transport has become a central tool in statistics and machine learning because it compares probability measures while respecting the geometry of the data space; a broad computational overview is given in \cite{peyre2019computational}. It has been used as a Wasserstein loss for structured prediction \cite{frogner2015learning}, as a Sinkhorn-divergence loss for generative modeling \cite{genevay2018learning}, and as a debiased interpolation between entropic OT and maximum mean discrepancies for distributional comparison \cite{feydy2019interpolating}. The classical, unregularized Wasserstein distance has however a well-known statistical drawback: empirical convergence rates deteriorate with the ambient dimension, hence a curse of dimensionality \cite{fournier2015rate}. Entropic regularization mitigates this difficulty while making OT computationally usable at scale. On the algorithmic side, matrix-scaling iterations provide the basic normalization principle \cite{Sinkhorn64}, entropic OT turns this principle into a large-scale learning tool \cite{CuturiSinkhorn}, and regularized transport solvers can be interpreted as iterative Bregman projections \cite{Benamou2015}. These iterations involve kernel evaluations and reductions, which are highly parallelizable and therefore well suited to GPU implementations. On the statistical side, sample-complexity bounds are available for Sinkhorn divergences \cite{genevay2019sample}, central limit theorems and statistical bounds have been established for entropic OT costs \cite{mena2019statistical}, and sharper plan-level stability estimates follow from the strong convexity of the entropic dual modulo translations \cite{rigollet2022sample}.}

\paragraph{\gabriel{Unbalanced optimal transport.}}
\gabriel{Unbalanced optimal transport (UOT) extends classical optimal transport by relaxing marginal constraints through divergence penalties, thereby allowing for mass creation and destruction. Early examples include free-boundary formulations of partial transport \cite{caffarelli2010free} and the analysis of the optimal partial transport problem \cite{figalli2010optimal}. Distances between finite Radon measures remove the equal-mass requirement directly \cite{kondratyev2016new}. Dynamic and geometric formulations later emerged through the Hellinger--Kantorovich and entropy-transport framework \cite{liero2018optimal}, as well as through dynamic and Kantorovich formulations for unbalanced OT \cite{chizat2018unbalanced}. Entropic regularization also leads to scalable UOT solvers: scaling algorithms were developed for unbalanced problems \cite{chizat2018scaling}, and Sinkhorn divergences were adapted to the unbalanced setting \cite{sejourne2019sinkhorn}. A broader account of the theory, numerics, and applications of UOT can be found in \cite{sejourne2023unbalanced}.}
\gabriel{These developments are driven by applications in which the coupling itself is often the relevant object. Wasserstein-type losses appear in learning problems \cite{frogner2015learning}, while Sinkhorn divergences provide losses for generative modeling \cite{genevay2018learning}. Regularized OT has been used to infer developmental trajectories from single-cell data \cite{schiebinger2019optimal}, proximal OT models have been proposed for population dynamics \cite{bunne2022proximal}, and regularized unbalanced OT has been used to learn stochastic dynamics from snapshots \cite{zhang2024learning}. In a related direction, unbalanced diffusion Schr\"odinger bridges combine mass variation with stochastic interpolation \cite{pariset2023unbalanced}. In these settings, finite-sample control of the plan is more informative than value convergence alone, since the coupling is used for prediction, interpolation, or downstream learning.}

\paragraph{\gabriel{Dual formulations and translation invariance.}}

\gabriel{In balanced entropic optimal transport, the dual objective is invariant under additive translations of the Kantorovich potentials, reflecting the fact that the dual variables are identifiable only up to a one-dimensional affine subspace. This non-uniqueness is routinely handled in Sinkhorn scaling by explicit normalization of the dual iterates \cite{CuturiSinkhorn}. At the level of analysis, the balanced entropic dual becomes strongly convex only after quotienting out this translation direction \cite{rigollet2022sample}.}
\gabriel{In contrast, this symmetry is generally broken in unbalanced optimal transport because the marginal divergence penalties depend on the absolute levels of the potentials. This loss of translation invariance has important algorithmic consequences and has motivated explicit normalization strategies. Translation-invariant Sinkhorn iterations for UOT improve numerical stability and convergence speed \cite{sejourne2022translation}, and related normalization ideas already appear in unbalanced scaling algorithms \cite{chizat2018scaling}.}

\paragraph{\gabriel{Statistical guarantees and sample complexity.}}
\gabriel{The statistical properties of entropic optimal transport have been extensively studied in the balanced setting. Finite-sample bounds for Sinkhorn divergences can be obtained through empirical-process arguments \cite{genevay2019sample}. Statistical bounds and central limit theorems are available for entropic OT costs \cite{mena2019statistical}. These results should be contrasted with the unregularized Wasserstein rates, which display the high-dimensional curse of dimensionality \cite{fournier2015rate}. Beyond scalar cost functionals, stability properties of entropic dual potentials and the associated couplings can also be controlled in the balanced setting \cite{rigollet2022sample}.}
\gabriel{For unbalanced optimal transport, statistical analyses are comparatively less developed. Unbalanced Sinkhorn divergences come with stability and computational guarantees \cite{sejourne2019sinkhorn}, and the dynamic and Kantorovich formulations provide the basic variational framework \cite{chizat2018unbalanced}. General sample-complexity results for the transport plan itself, especially for broad $\varphi$-divergences, remain largely absent. The present work addresses this gap by combining a translation-invariant dual formulation with uniform compactness and strong convexity to derive high-probability sample-complexity bounds for unbalanced entropic optimal transport. In contrast to prior work, our results apply to a broad class of unbalanced models and explicitly characterize the role of the translation direction in controlling statistical fluctuations.}

\subsection{Contribution}

\gabriel{
Section~\ref{sec:translation_invariant_formulation} develops the main geometric contribution. It introduces the envelope $\Tt^{\alpha,\beta}$, which removes the scalar translation direction from the intrinsic dual variables. Theorem~\ref{thm:compact_anchor} shows that this envelope has a compact anchored minimization domain $\Pp_0^{\alpha,\beta}$ and is strongly convex there in the weighted $L^2(\alpha,\beta)$ norm. This provides the dual stability mechanism used in Proposition~\ref{prop:convergence_primal_weak} to obtain convergence of anchored potentials and weak convergence of the associated transport plans.

Subsection~\ref{subsec:uniform_translation_compactness} controls the remaining scalar translation. Under the domain condition in Assumption~\ref{ass:bounded}, Theorem~\ref{thm:restrictCompactSets} proves that the envelope minimization can be restricted to a fixed interval $[-R,R]$, with the explicit radius \eqref{eq:R_explicit}. This yields uniform $L^\infty$ bounds on translated dual potentials and applies to the usual unbalanced divergences, including KL, $\chi^2$, Hellinger, Jensen--Shannon, and $\alpha$-divergences.

Section~\ref{sec:sample_complexity} turns these compactness and convexity estimates into a plan-level statistical guarantee. The curvature and subgradient assumptions in Assumption~\ref{ass:strong_conv}, together with the translation stability estimate of Proposition~\ref{prop:lambda_stability}, allow us to compare empirical and population dual solutions. Theorem~\ref{SampleComplextiyuOTTHeorem} then proves the main high-probability bound \eqref{eq:samp-comp-forward} for the deviation between the empirical optimal plan $\pi^n$ and the population plan $\pi^\star$, using the concentration estimates collected in Section~\ref{sec:concentration_bounds}.}

\section{Unbalanced entropic OT}
In this section, we recall the formulation of entropically regularized unbalanced optimal transport and its dual representation. We work in a general setting where deviations from prescribed marginals are penalized via $\varphi$-divergences, encompassing most models used in practice.


\begin{assumption}\label{ass:compact}
The spaces $\Xx$ and $\Yy$ are compact. Let $c:\mathcal{X}\times\Yy\to \RR$ be a $L$-Lipschitz function.
\end{assumption}

\paragraph{Divergences}
The $\varphi$-divergence associated to a convex, positive, lower-semi-continuous function $\varphi:(0,\infty)\to [0,\infty)$ that satisfies $\varphi(1)=0$ is defined as 
\begin{align*}
D_\varphi(\alpha|\beta):=\int_\mathcal{X} \varphi\Big(\frac{d\alpha}{d\beta}(x)\Big)d\beta(x)+\varphi^\prime_\infty\int_\mathcal{X}d\alpha^\perp(x),
\end{align*} 	
where $\varphi^\prime_\infty=\lim_{x\to \infty}\varphi(x)/x$, and where, for any pair $(\alpha,\beta)\in \mathcal{M}^+(\mathcal{X})$, $\alpha^\perp$ is defined via the Radon-Nikodym-Lebesgue decomposition $\alpha=(d\alpha/d\beta)\beta+\alpha^\perp$ (see \cite{sejourne2019sinkhorn}). The Kullback-Leibler divergence is abbreviated by KL and corresponds to $D_\varphi$ with $\varphi(p)=p\log(p)-p+1$.  
The indication function of a set $S$ is denoted by $\iota_S$ and the notation $\iota_{=\beta}$ abbreviates the $D_{\lbrace 1\rbrace}$-divergence,  \emph{i.e.,} it is defined by $\iota_{=\beta}(\alpha)=0$ if $\alpha=\beta$ and $+\infty$ otherwise.

\paragraph{Primal problem}
Given $\alpha\in\Mm_+(\Xx)$, $\beta\in\Mm_+(\Yy)$ and a cost $c\in\Cc(\Xx\times\Yy)$,
the entropically regularized unbalanced OT coupling is defined by
\begin{equation}\label{eq:UOT2}
\pis
=
\argmin_{\pi \in \Mm_+(\Xx \times \Yy)}
\langle c, \pi \rangle
+\eta \; \KL(\pi \mid \alpha \otimes \beta)
+ D_{\varphi_1}(\pi_1 \mid \alpha)
+ D_{\varphi_2}(\pi_2 \mid \beta) .
\tag{p-UOT}
\end{equation}
Here $\pi_1,\pi_2$ denote the marginals of $\pi$, and $\varphi_1,\varphi_2$ are $\varphi$-divergences. We denote the masses by $m_\alpha \eqdef \alpha(\Xx)$ and $m_\beta \eqdef \beta(\Yy)$.

Balanced OT is recovered by taking hard constraints on marginals, that is, $\varphi_i = \iota_{\{1\}}$ and $m_\alpha=m_\beta = 1$, whereas other divergences
allow mass variation.

\paragraph{Dual problem}
Under Assumption~\ref{ass:compact} and the standing assumptions that $\varphi_i$ are
proper convex, positive, lower-semicontinuous entropy functions with $\varphi_i(1)=0$,
the primal problem admits a minimizer by the Fenchel--Rockafellar duality argument
of \cite[Theorem~3.2]{chizat2018scaling}; this minimizer is unique because of the
strict convexity of the entropic KL term. Moreover, the same duality argument gives
the standard dual characterization recalled next; see also
\cite[Sec.~3]{sejourne2019sinkhorn} for the entropic unbalanced formulation under
these entropy-function assumptions. The Kantorovich potentials themselves need not
be unique before fixing the translation/gauge freedom; uniqueness of an anchored
representative requires additional compactness/coercivity or quotient arguments
\cite[Sec.~3.3]{sejourne2019sinkhorn}, as addressed below through the anchored
envelope formulation.

There exist Kantorovich potentials $(f^\star,g^\star)$ such that
\begin{equation}\label{RadonDensity}
\frac{d\pis}{d\as\otimes\bs}
=
\exp\!\Big(\frac{f^\star \oplus g^\star - \cs}{\eta}\Big)
\quad
(\as\otimes\bs)\text{-a.e.},
\end{equation}
where $(f^\star,g^\star)$ solve the dual problem
\begin{equation}\label{eq:UOT2_dual}
(f^\star,g^\star)\in
\argmin_{(f,g)\in L^2(\as)\times L^2(\bs)}
\Kk^{\as,\bs}(f,g),
\tag{d-UOT}
\end{equation}
with
\begin{equation}\label{eq:kantorovich_fix_cost}
\Kk^{\as,\bs}(f,g)
:=
\langle \varphi_1^\ast(-f),\as\rangle
+
\langle \varphi_2^\ast(-g),\bs\rangle
+
\eta \Big\langle \exp\!\Big(\frac{f\oplus g-\cs}{\eta} \Big),\as\otimes\bs\Big\rangle.
\end{equation}

In the classical OT setting where $m_\al = m_\beta$, and $\varphi_i^*(x) = x$ for $i=1,2$, and hence, 
$\Kk^{\alpha,\beta}$ is invariant under the translation
\[
(f,g)\mapsto(f+\lambda,g-\lambda),\qquad \lambda\in\mathbb R.
\]
Here, translation invariance of the dual objective allows one to quotient out a one-dimensional flat direction, yielding a strictly convex functional on a suitable subspace. 
This geometric property plays a central role in stability analyses and convergence guarantees of the transport plan.
In the unbalanced case, although the plan is invariant to scalar translation of $f^\star$ and $g^\star$, the translation invariant property is lost in $\Kk^{\alpha,\beta}$.

\section{Translational invariation formulation}\label{sec:translation_invariant_formulation}

We consider a translation-invariant envelope of the dual objective, obtained by minimizing over the translation parameter. This construction restores invariance by design and allows us to separate the essential degrees of freedom of the dual from the spurious translation direction.

We show that, under mild regularity assumptions on the cost and the underlying spaces, the envelope functional admits a compactly anchored minimization domain and is uniformly strongly convex on this set. This result is the cornerstone of our analysis: it provides a stable geometric framework that does not rely on divergence-specific properties and will be crucial for establishing continuity and statistical guarantees.

 Following \cite{sejourne2023unbalanced},  we introduce the
\emph{translation-invariant envelope} 
\begin{equation}
    \Tt^{\alpha,\beta}(f,g)
\coloneqq
\inf_{\lambda\in\mathbb R}\Kk^{\alpha,\beta}(f+\lambda,g-\lambda).
\end{equation}
By definition,
for all $\lambda\in\RR$, $\Tt^{\al,\beta}(f-\lambda,g+\lambda)=\Tt(f,g)$.

While translation invariance of the dual formulation of unbalanced optimal transport has been previously exploited for algorithmic purposes to improve the stability of Sinkhorn iterations \cite{sejourne2022translation}, our work takes a complementary perspective by treating this invariance as a genuine geometric symmetry of the dual problem. By explicitly passing to the associated quotient space, we recover strong convexity of the dual objective in its intrinsic degrees of freedom, which allows us to develop a unified stability and statistical analysis of unbalanced entropic optimal transport.

\subsection{Envelope definition and strong convexity}
We show that $\Tt$ is strongly convex with respect to the  quotient space with $\int f d\alpha = 0$. In the subsequent section, we show that this strong convexity property leads to a simple proof on sample complexity bounds for the corresponding transport plans.


\begin{thm}[Compact anchoring and strong convexity of $\Tt$] \label{thm:compact_anchor}
Assume that $c$ is $L$-Lipschitz on $\Xx\times\Yy$ and that
Assumption~\ref{ass:compact} holds.
Set $m\coloneqq m_\alpha m_\beta$.
Let $U$ be any finite constant such that $U\geq \inf_{f,g}\Kk(f,g)$. In particular,
\[
U\eqdef  m_\alpha \sup_{|t|\le \|c\|_\infty}\varphi_1^*(t)
+m_\beta \varphi_2^*(0)
+\eta\,m_\alpha m_\beta
\]
suffices. Define
\[
C_{\mathrm{FY}}\coloneqq
m_\alpha\varphi_1(m_\beta)+m_\beta\varphi_2(m_\alpha)
+m\|c\|_\infty
\]
and
\[
M \eqdef \|c\|_\infty+\frac{2(U+C_{\mathrm{FY}})}{m}
+4L\big(\diam(\Xx)+\diam(\Yy)\big).
\]
Define the anchored compact set
\[
\Pp_0^{\alpha,\beta}
\coloneqq
\Bigl\{(f,g):\;
f,g\in\Cc_L,\;
\int f\,d\alpha=0,\;
\|f\|_\infty\le M,\;
\|g\|_\infty\le M
\Bigr\},
\]
where $\Cc_L$ denotes the set of $L$-Lipschitz functions.
Then
\begin{equation}\label{eq:UOT-kantoro-explicit}
\min_{f,g}\Kk^{\alpha,\beta}(f,g)
=
\min_{(f,g)\in\Pp_0^{\alpha,\beta}}
\Tt^{\alpha,\beta}(f,g).
\end{equation}
Furthermore, with $\mu \;=\; \eta^{-1}e^{-M/\eta}$,
$\Tt^{\alpha,\beta}$ is $\mu$-strongly convex on $\Pp_0^{\alpha,\beta}$
with respect to
\[
\|(u,v)\|_{\alpha,\beta}^2
\coloneqq m_\beta \int u^2\,d\alpha+ m_\alpha \int v^2\,d\beta.
\]

\end{thm}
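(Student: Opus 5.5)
The plan has three parts: show that any near-optimal dual pair can be reduced, without increasing $\Kk$, to a representative in $\Pp_0^{\alpha,\beta}$; identify the two minima; and obtain strong convexity from a uniform lower bound on the Gibbs density on $\Pp_0^{\alpha,\beta}$.

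\textbf{Step 1: reduction to Lipschitz potentials.} Start from an arbitrary pair $(f,g)$ with $\Kk(f,g)\le U$. Replace it by its (unbalanced) $c$-transforms. For fixed $g$, minimize $\Kk$ pointwise in $f(x)$. The first-order condition reads
\[
\partial\varphi_1^*(-f(x))\ni e^{f(x)/\eta}\int e^{(g-c(x,\cdot))/\eta}\,d\beta .
\]
The map $x\mapsto \eta\log\int e^{(g-c(x,\cdot))/\eta}d\beta$ is $L$-Lipschitz because $c$ is. Composing with the monotone resolvent of $\varphi_1^*$, which is a contraction in this scalar equation, the minimizer $\bar f$ is again $L$-Lipschitz. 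The same argument applied in $g$ gives an $L$-Lipschitz $\bar g$. Neither step increases $\Kk$. Since $\Kk$ is a sum of one-variable integrals coupled only through the exponential, alternating these two steps shows that the infimum of $\Kk$ is the same over $\Cc_L\times\Cc_L$.

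\textbf{Step 2: anchoring and sup-norm bounds.} Next fix the translation by choosing $\lambda=-\int f\,d\alpha/m_\alpha$, so that $\int f\,d\alpha=0$; this leaves $\Tt$ unchanged. A zero-mean $L$-Lipschitz function satisfies $\|f\|_\infty\le L\,\diam(\Xx)$. To bound $g$, combine the Fenchel--Young inequalities $\varphi_1^*(-f)\ge -f m_\beta-\varphi_1(m_\beta)$ and $\varphi_2^*(-g)\ge -g m_\alpha-\varphi_2(m_\alpha)$ with $\Tt\le U$. This gives
\[
-m\Big(\tfrac{1}{m_\beta}\textstyle\int g\,d\beta\Big)+\eta\Big\langle e^{(f\oplus g-c)/\eta},\alpha\otimes\beta\Big\rangle\le U+C_{\mathrm{FY}}-m\|c\|_\infty .
\]
Here I absorb $\langle c\rangle$ into $C_{\mathrm{FY}}$, and the linear terms are written as $\int (f\oplus g)\,d\alpha\otimes\beta$. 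Use $e^{s}\ge 1+s$ together with the oscillation bound $\mathrm{osc}(f\oplus g)\le L(\diam\Xx+\diam\Yy)$. This controls the mean of $g$ from both sides: from above through the exponential term, and from below through the linear term. The result is $|{\textstyle\frac{1}{m_\beta}\int g\,d\beta}|\le \|c\|_\infty+2(U+C_{\mathrm{FY}})/m+\dots$. Adding the Lipschitz oscillation then yields $\|g\|_\infty\le M$. The constant $4L(\cdots)$ gives slack for these oscillation terms.

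\textbf{Step 3: equality of minima.} These bounds show that $\inf\Kk=\inf_{\Pp_0}\Tt$. The set $\Pp_0$ is compact in $\Cc$ by Arzelà--Ascoli, and $\Tt$ is lower semicontinuous there, so the minimum is attained. This proves \eqref{eq:UOT-kantoro-explicit}.

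\textbf{Step 4: strong convexity.} The map $\Kk$ is jointly convex. The first two terms are convex. The third term $E(f,g)$ has Hessian in direction $(u,v)$ equal to
\[
\eta^{-1}\int (u\oplus v)^2 e^{(f\oplus g-c)/\eta}\,d\alpha\otimes\beta .
\]
I will need the Gibbs density lower bound $e^{-M/\eta}$ on the relevant set. I expect to adjust constants here, since strictly $f\oplus g-c\ge -2M-\|c\|_\infty$, so $M$ may need to be read as bounding $|f\oplus g-c|$. With that bound, the Hessian is at least
\[
\eta^{-1}e^{-M/\eta}\int (u\oplus v)^2\,d\alpha\otimes\beta .
\]
For $\int u\,d\alpha=0$ the cross term vanishes, and
\[
\int (u\oplus v)^2\,d\alpha\otimes\beta=m_\beta\int u^2\,d\alpha+m_\alpha\int v^2\,d\beta .
\]

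\textbf{Main obstacle: passing strong convexity to $\Tt$.} $\Tt$ is a partial infimum of a jointly convex function, namely $(f,g,\lambda)\mapsto\Kk(f+\lambda,g-\lambda)$, so $\Tt$ is convex. Strong convexity of $\Tt$ in the quotient norm follows from the standard fact that partial minimization preserves strong convexity modulo the minimized direction. Concretely, take optimal $\lambda_0,\lambda_1$ for the two endpoints and interpolate them. Shifting by $\lambda$ alters $(u,v)$ by $(\lambda,-\lambda)$, which changes only the mean of $u$. The term $\int(u\oplus v)^2$ is invariant under this shift, so the strong convexity inequality for $\Kk$ along the interpolated path transfers to $\Tt$ with the same modulus $\mu$. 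One must check that the interpolated translated potentials stay in a region where the density bound holds. This is where uniform control of the optimal $\lambda$, or a reformulation of the bound directly on $f\oplus g$, which is $\lambda$-invariant, is needed. I would use the latter: the Hessian lower bound depends only on $f\oplus g$, which makes the argument clean.
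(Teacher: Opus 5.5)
Your outline has the same skeleton as the paper's proof. You use Fenchel--Young at the points $m_\beta$ and $m_\alpha$, so the resulting lower bound involves only $f\oplus g$; it is therefore translation invariant and bounds $\Tt$ as well as $\Kk$. You then anchor with $\int f\,d\alpha=0$ and use the Lipschitz oscillation bound. Strong convexity comes only from the exponential term, with the cross term killed by the anchoring.

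\textbf{The gap.} One step fails as written. In Step 2, $e^{s}\ge 1+s$ cannot bound $\bar g=\frac{1}{m_\beta}\int g\,d\beta$ from above. It gives $\eta\int e^{(f\oplus g-c)/\eta}\,d(\alpha\otimes\beta)\ge \eta m+m\bar g-\int c\,d(\alpha\otimes\beta)$. The $+m\bar g$ exactly cancels the $-m\bar g$ from Fenchel--Young, so no information on $\bar g$ survives. You need superlinear growth of the exponential. After the oscillation bound you have $\eta e^{(\bar g-K)/\eta}-\bar g\le (U+C_{\mathrm{FY}})/m-\|c\|_\infty$, with $K=\|c\|_\infty+L(\diam(\Xx)+\diam(\Yy))$. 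Using $e^{s}\ge 2s$ then gives $\bar g\le (U+C_{\mathrm{FY}})/m+\|c\|_\infty+2L(\diam(\Xx)+\diam(\Yy))$, which fits inside $M$. Alternatively, do as the paper does: apply Jensen to $q=(f\oplus g-c)/\eta$ to get $e^{\bar q}-\bar q\le D$, hence $|\bar q|\le 2D$. This directly yields $\|f\oplus g-c\|_\infty\le M_0$, which is exactly the $\lambda$-invariant bound you want in Step 4.

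\textbf{Comparison with the paper.} Elsewhere your route is correct and in places more complete than the paper's.
\begin{itemize}
\item \emph{Lipschitz potentials.} The paper simply asserts that $f,g$ are $L$-Lipschitz. Your unbalanced $c$-transform argument is what actually justifies working in $\Cc_L\times\Cc_L$: the minimizer of $u\mapsto\varphi_1^*(-u)+\eta e^{(u+S(x))/\eta}$ is a $1$-Lipschitz function of $S(x)$ by monotonicity of $\partial\varphi_1^*$. One $f$-update followed by one $g$-update already suffices; no alternation is needed.
\item \emph{Strong convexity of $\Tt$.} The paper writes $\Tt=F+\Tt_0$, with $F$ the translation-invariant exponential term and $\Tt_0$ the partial infimum over $\lambda$ of the $\varphi_i^*$ terms, convex by partial minimization. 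This is the same mechanism as your interpolation of shifts. However, it does not need minimizing $\lambda_i$, which may fail to exist without Assumption~\ref{ass:bounded}. In your version, use $\varepsilon$-optimal shifts and let $\varepsilon\to0$.
\item \emph{The modulus $\mu$.} Your caveat is justified. On $\Pp_0^{\alpha,\beta}$ as defined one only has $\|f\oplus g-c\|_\infty\le 2M+\|c\|_\infty$. The paper's proof silently uses $\|f\oplus g-c\|_\infty\le M_0$. That bound holds on the convex region $\{\|f\oplus g-c\|_\infty\le M_0\}$ isolated in its Step 1, which contains the minimizer, but not on all of $\Pp_0^{\alpha,\beta}$. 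With that restriction, $\mu=\eta^{-1}e^{-M/\eta}$ is fine.
\item \emph{Attainment.} The lower semicontinuity of $\Tt$ that you invoke is not automatic, since $\Tt$ is an infimum over unbounded $\lambda$. The paper does not address attainment either.
\end{itemize}
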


\begin{proof}
For brevity write $\Kk=\Kk^{\alpha,\beta}$ and $\Tt=\Tt^{\alpha,\beta}$.

\medskip
\noindent\textbf{Step 1: Explicit bound on $f\oplus g-c$.}
Let $h=f\oplus g-c$ and $q=h/\eta$.
By the Fenchel--Young inequality,
\[
\varphi_1^\ast(-f)+\varphi_1(m_\beta)+ m_\beta f\ge 0,
\qquad
\varphi_2^\ast(-g)+\varphi_2(m_\alpha)+m_\alpha g\ge 0.
\]
So,
\[
\Kk(f,g)+C_{\mathrm{FY}}
\ge
\eta\int (e^{q}-q)\,d(\alpha\otimes\beta).
\] 
Here $C_{\mathrm{FY}}$ is the constant from the statement and
$q\eqdef (f\oplus g - c)/\eta$.

Any minimizer $(f,g)$ of $\Kk$ satisfies $\Kk(f,g)\le \Kk(f_0,g_0)$,
where we define
\[
f_0(x)\coloneqq \inf_{y\in\Yy} c(x,y),
\qquad g_0(y)\coloneqq 0.
\]
Then $f_0\oplus g_0-c\le 0$, hence
$\exp((f_0\oplus g_0-c)/\eta)\le 1$ and therefore
\[
\eta\Big\langle \exp\Big(\frac{f_0\oplus g_0-c}{\eta}\Big),\alpha\otimes\beta\Big\rangle
\le \eta\,m_\alpha m_\beta.
\]
Since $-f_0(x)\in[-\|c\|_\infty,\|c\|_\infty]$, we also have
\[
\langle \varphi_1^*(-f_0),\alpha\rangle
\le m_\alpha \sup_{|t|\le \|c\|_\infty}\varphi_1^*(t).
\]
Thus
\[
\inf_{f,g}\Kk^{\alpha,\beta}(f,g)
\le \Kk^{\alpha,\beta}(f_0,g_0)
\le
m_\alpha \sup_{|t|\le \|c\|_\infty}\varphi_1^*(t)
+m_\beta \varphi_2^*(0)
+\eta\,m_\alpha m_\beta.
\]
This proves that the displayed choice of $U$ in the statement is admissible.

For any chosen upper bound $U$, we can restrict to considering $q$ such that 
\[
\int (e^{q}-q)\,d(\alpha\otimes\beta)
\le \frac{U+C_{\mathrm{FY}}}{\eta}
\eqdef D\,m.
\]
By Jensen's inequality,
$e^{\bar q}-\bar q\le D$ with
$\bar q=\frac1m\int q\,d(\alpha\otimes\beta)$,
which implies $|\bar q|\le 2D$ assuming $D\geq 1$.
Therefore
\[
\Big|\int (f\oplus g - c)\,d(\alpha\otimes\beta)\Big|
=\eta m|\bar q|
\le 2D \eta m.
\]
Since $f,g,c$ are $L$-Lipschitz, $(f\oplus g - c)$ is $3L$-Lipschitz on
$\Xx\times\Yy$, and for all $(x,y)$,
\[
|(f\oplus g - c)(x,y)|
\le
\Big|\frac{1}{m}\int (f\oplus g - c)\,d(\alpha\otimes\beta)\Big|
+3L\big(\diam(\Xx)+\diam(\Yy)\big).
\]
Thus,  we can restrict minimization of $\Kk^{\alpha,\beta}$ to $(f,g)$ that satisfies
\begin{equation}\label{eq:Mh_def}
\|f\oplus g-c\|_\infty \;\le\;
M_0 \;\coloneqq\;
\frac{2(U+C_{\mathrm{FY}})}{m}+3L\big(\diam(\Xx)+\diam(\Yy)\big).
\end{equation}

\medskip
\noindent\textbf{Step 2: Compact anchoring.}
Fix the gauge $\int f\,d\alpha=0$.
For $f\in\Cc_L$,
\[
|f(x)|
=
\Big|\frac1{m_\alpha}\int (f(x)-f(x'))\,d\alpha(x')\Big|
\le L\,\diam(\Xx),
\]
so $\|f\|_\infty\le L\diam(\Xx)$.
Using $\|f\oplus g-c\|_\infty\le M_0$ gives
\[
|g(y)|
\le \|c\|_\infty+M_0+\|f\|_\infty,
\]
which yields 
\begin{equation}\label{eq:M_def}
\|f\|_\infty \le L\,\diam(\Xx),
\qquad
\|g\|_\infty \le
\|c\|_\infty + \frac{2(U+C_{\mathrm{FY}})}{m}
+4L\big(\diam(\Xx)+\diam(\Yy)\big) \eqdef M.
\end{equation}
Since translations preserve $\Tt$,
\eqref{eq:UOT-kantoro-explicit} follows.

\medskip
\noindent\textbf{Step 3: Strong convexity.}
Decompose $\Tt=F+\Tt_0$ with
\[
F(f,g)=\eta\Big\langle e^{(f\oplus g-c)/\eta},\alpha\otimes\beta\Big\rangle,
\qquad
\Tt_0(f,g)=
\inf_{\lambda\in\mathbb R}
\bigl(
\langle \varphi_1^\ast(-f+\lambda),\alpha\rangle
+
\langle \varphi_2^\ast(-g-\lambda),\beta\rangle
\bigr).
\]
The map $\Tt_0$ is convex.
On $\Pp_0^{\alpha,\beta}$ we have
$\|f\oplus g-c\|_\infty\le M_0$, hence the Hessian of $F$
is bounded below by $\eta^{-1}e^{-M_0/\eta}$.
A direct expansion using $\int f\,d\alpha=\int f'\,d\alpha=0$ gives 
\[
\int\big((f-f')\oplus(g-g')\big)^2\,d(\alpha\otimes\beta)
=
m_\beta \|f-f'\|_{L^2(\alpha)}^2+ m_\alpha \|g-g'\|_{L^2(\beta)}^2,
\]
which yields the claimed strong convexity with $\eta^{-1} e^{-M_0/\eta}\geq \eta^{-1} e^{-M/\eta}$.
\end{proof}

\subsection{Continuity and stability}
Thanks to the strong convexity property, we have the following continuity property  of the potentials:

\begin{prop}[Continuity of quotient dual potentials and weak convergence of the primal plans]
\label{prop:convergence_primal_weak}
Assume Assumption~\ref{ass:compact}. Let $\alpha^n \rightharpoonup \alpha$ and $\beta^n \rightharpoonup \beta$ weakly.
For each $n$, let $(f_n,g_n)\in \Pp_0^{\alpha^n,\beta^n}$ be the (unique) minimizer of the translation-invariant
dual objective $\Tt^{\alpha^n,\beta^n}$. Then $(f_n,g_n)$ converges uniformly to the (unique) minimizer
$(f,g)\in \Pp_0^{\alpha,\beta}$ of $\Tt^{\alpha,\beta}$.

Moreover, defining
\[
\pi^n \coloneqq \exp\!\Big(\frac{f_n\oplus g_n-c}{\eta}\Big)\,(\alpha^n\otimes\beta^n),
\qquad
\pi \coloneqq \exp\!\Big(\frac{f\oplus g-c}{\eta}\Big)\,(\alpha\otimes\beta),
\]
we have $\pi^n \rightharpoonup \pi$ weakly; equivalently,
$\langle h,\pi^n-\pi\rangle\to 0$ for all $h\in C(\Xx\times\Yy)$.
\end{prop}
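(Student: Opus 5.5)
The argument will be compactness plus uniqueness. Theorem~\ref{thm:compact_anchor} constrains the minimizers. Since $\alpha^n\rightharpoonup\alpha$ and $\beta^n\rightharpoonup\beta$, the masses $m_{\alpha^n}\to m_\alpha$ and $m_{\beta^n}\to m_\beta$ converge. Hence the constants $U$, $C_{\mathrm{FY}}$ and $M$ of Theorem~\ref{thm:compact_anchor}, computed for $(\alpha^n,\beta^n)$, are bounded uniformly in $n$ by some $\bar M$; this needs $m_\alpha m_\beta>0$, which we assume. So every $(f_n,g_n)$ lies in the set of pairs of $L$-Lipschitz functions with sup-norm at most $\bar M$. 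By Arzel\`a--Ascoli this set is compact in $C(\Xx)\times C(\Yy)$, and every subsequence has a further subsequence converging uniformly to some $(\bar f,\bar g)$. The anchoring passes to the limit: $\int f_n\,d\alpha^n\to\int \bar f\,d\alpha$, because uniform convergence combined with weak convergence gives convergence of the integrals. Thus $(\bar f,\bar g)\in\Pp_0^{\alpha,\beta}$ after enlarging $M$ to $\bar M$, which leaves the minimization unchanged by \eqref{eq:UOT-kantoro-explicit}.

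Next I show that $(\bar f,\bar g)$ minimizes $\Tt^{\alpha,\beta}$. For uniformly convergent, uniformly bounded Lipschitz pairs I claim
\[
\Kk^{\alpha^n,\beta^n}(f_n+\lambda,g_n-\lambda)\to\Kk^{\alpha,\beta}(\bar f+\lambda,\bar g-\lambda),
\]
uniformly for $\lambda$ in a compact interval. The exponential term converges because $\alpha^n\otimes\beta^n\rightharpoonup\alpha\otimes\beta$ and the integrands converge uniformly. The $\varphi_i^*$ terms converge provided $\varphi_i^*$ is finite and continuous on the relevant bounded range. For KL-type divergences this holds. If $\varphi_i^*$ takes the value $+\infty$ (hard or partial constraints), I would use lower semicontinuity for the $\liminf$ inequality and handle the $\limsup$ by a recovery argument.

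To pass to $\Tt$ I must localize the infimum over $\lambda$ in a fixed bounded interval, uniformly in $n$. Step~1 of Theorem~\ref{thm:compact_anchor} gives $\|f\oplus g-c\|_\infty\le M_0$ at any near-minimizer, and this bounds $\lambda$ in terms of the bounded potentials. With this localization, $\Tt^{\alpha^n,\beta^n}(f_n,g_n)\to\Tt^{\alpha,\beta}(\bar f,\bar g)$. For the optimality inequality, let $(f,g)$ be the minimizer for $(\alpha,\beta)$. The pair $(f-\int f\,d\alpha^n/m_{\alpha^n},\,g+\int f\,d\alpha^n/m_{\alpha^n})$ is admissible for $n$; it is a pure translation, so its $\Tt^{\alpha^n,\beta^n}$ value equals $\Tt^{\alpha^n,\beta^n}(f,g)\to\Tt^{\alpha,\beta}(f,g)$. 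Comparing with $\Tt^{\alpha^n,\beta^n}(f_n,g_n)$ gives $\Tt^{\alpha,\beta}(\bar f,\bar g)\le\Tt^{\alpha,\beta}(f,g)$.

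Strong convexity from Theorem~\ref{thm:compact_anchor} makes the minimizer unique in $L^2(\alpha)\times L^2(\beta)$. Continuity of both pairs, together with the anchoring, then identifies $(\bar f,\bar g)=(f,g)$ on the supports, which is all that matters for $\pi$. Since every subsequence has a sub-subsequence with the same limit, the whole sequence converges uniformly. Weak convergence of the plans follows directly: for $h\in C(\Xx\times\Yy)$, the function $h\,e^{(f_n\oplus g_n-c)/\eta}$ converges uniformly to $h\,e^{(f\oplus g-c)/\eta}$, and $\alpha^n\otimes\beta^n\rightharpoonup\alpha\otimes\beta$, so $\langle h,\pi^n\rangle\to\langle h,\pi\rangle$.

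The main obstacle is the uniform control of the translation $\lambda$ and the continuity of the $\varphi_i^*$ terms, which is needed to pass the infimum in $\Tt$ to the limit. A secondary issue is uniqueness off the supports. I would state uniqueness modulo $\alpha,\beta$-null sets, or note that it does not affect the plans.
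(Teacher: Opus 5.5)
Your route is the paper's: Arzel\`a--Ascoli compactness, passing the anchoring $\int f_n\,d\alpha^n=0$ to the limit, comparing $(f_n,g_n)$ with a translate of the population minimizer, uniqueness from strong convexity, and the subsequence principle. Your treatment of uniqueness only $\alpha,\beta$-a.e.\ (and why it does not affect $\pi^n\rightharpoonup\pi$) is more careful than the paper's. The gap is in the localization of $\lambda$. You rightly flag this as the main obstacle, but your proposed fix fails. Step~1 of Theorem~\ref{thm:compact_anchor} cannot bound $\lambda$, because $(f+\lambda)\oplus(g-\lambda)=f\oplus g$, so the estimate $\|f\oplus g-c\|_\infty\le M_0$ is blind to translations. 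Concretely, in its Fenchel--Young lower bound the test marginals $m_\beta\alpha$ and $m_\alpha\beta$ have the same mass $m_\alpha m_\beta$, so the terms $\mp m_\alpha m_\beta\lambda$ cancel exactly. The localization is not dispensable either. The inequality $\limsup_n\Tt^{\alpha^n,\beta^n}(f,g)\le\Tt^{\alpha,\beta}(f,g)$ for the fixed competitor does follow from convergence at each fixed $\lambda$ alone. But the other inequality, $\liminf_n\Tt^{\alpha^n,\beta^n}(f_n,g_n)\ge\Tt^{\alpha,\beta}(\bar f,\bar g)$, can fail if the optimal shifts $\lambda_n$ escape to infinity. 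For example, the convex functions $G_n(\lambda)=\max(-1,-\lambda/n)$ converge pointwise to $0$, yet $\inf G_n=-1$ for every $n$.

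The missing ingredient is coercivity in $\lambda$. This requires Fenchel--Young test marginals of \emph{unequal} mass, which is exactly Assumption~\ref{ass:bounded} and Theorem~\ref{thm:restrictCompactSets}. Suppose the strict inequalities of Assumption~\ref{ass:bounded} hold for the limit masses. Then they persist for $n$ large, since $m_{\alpha^n}\to m_\alpha$ and $m_{\beta^n}\to m_\beta$. You then get $\Kk^{\alpha^n,\beta^n}(f_n+\lambda,g_n-\lambda)\ge s_+\lambda-C_{a,b}$ and $\Kk^{\alpha^n,\beta^n}(f_n+\lambda,g_n-\lambda)\ge -s_-\lambda-C_{a',b'}$, with constants uniform in $n$. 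Combined with $\Tt^{\alpha^n,\beta^n}(f_n,g_n)=\min\Kk^{\alpha^n,\beta^n}\le U_n$, these confine the (near-)optimal shifts to a fixed interval $[-\bar R,\bar R]$. Extract $\lambda_n\to\bar\lambda$. Lower semicontinuity of $\varphi_i^*$ (which is bounded below on bounded sets) then gives $\liminf_n\Tt^{\alpha^n,\beta^n}(f_n,g_n)\ge\Kk^{\alpha,\beta}(\bar f+\bar\lambda,\bar g-\bar\lambda)\ge\Tt^{\alpha,\beta}(\bar f,\bar g)$. Assumption~\ref{ass:bounded} is not among the hypotheses of the proposition, so you must add it or supply another source of coercivity in $\lambda$. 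The paper's own proof is silent at exactly this point: it just asserts that ``both sides converge''. Your proposal correctly exposes this weak spot, but the argument you offer does not close it.
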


\begin{proof}
By Assumption~\ref{ass:compact}, $f_n,g_n\in \Cc_L$ with uniform $L^\infty$ bounds.
Since $\Xx$ and $\Yy$ are compact, Arzel\`a--Ascoli yields a subsequence (not relabeled) such that
$f_n\to f$ and $g_n\to g$ uniformly, for some $f,g\in \Cc_L$.

We first show that $(f,g)\in\Pp_0^{\alpha,\beta}$.
Since $\int f_n\,d\alpha^n=0$ for all $n$,
\[
0=\int f_n\,d\alpha^n
= \int f\,d\alpha
+ \int (f_n-f)\,d\alpha^n
+ \int f\,d(\alpha^n-\alpha).
\]
The second term tends to $0$ by uniform convergence, and the third by weak convergence
$\alpha^n\rightharpoonup\alpha$ and continuity of $f$. Hence $\int f\,d\alpha=0$.

Next, we identify $(f,g)$ as the minimizer of $\Tt^{\alpha,\beta}$.
Let $(\hat f,\hat g)\in\Pp_0^{\alpha,\beta}$ be arbitrary and define
$\kappa_n\coloneqq \frac{1}{m_\alpha} \int \hat f\,d\alpha^n$, 
$\hat f_n\coloneqq \hat f-\kappa_n$,
$\hat g_n\coloneqq \hat g+\kappa_n$.
Then $(\hat f_n,\hat g_n)\in\Pp_0^{\alpha^n,\beta^n}$ and
$\hat f_n\oplus \hat g_n=\hat f\oplus \hat g$.
By optimality of $(f_n,g_n)$,
\[
\Tt^{\alpha^n,\beta^n}(f_n,g_n)\le \Tt^{\alpha^n,\beta^n}(\hat f_n,\hat g_n).
\]
Since $f_n\to f$, $g_n\to g$ uniformly, $c$ is Lipschitz, and
$\alpha^n\otimes\beta^n\rightharpoonup\alpha\otimes\beta$,
both sides converge to $\Tt^{\alpha,\beta}(f,g)$ and $\Tt^{\alpha,\beta}(\hat f,\hat g)$ respectively.
Thus $\Tt^{\alpha,\beta}(f,g)\le \Tt^{\alpha,\beta}(\hat f,\hat g)$, proving optimality.
By strong convexity of $\Tt^{\alpha,\beta}$ on $\Pp_0^{\alpha,\beta}$, the minimizer is unique,
so the full sequence converges uniformly.

Finally, since $f_n\oplus g_n\to f\oplus g$ uniformly and
$\alpha^n\otimes\beta^n\rightharpoonup\alpha\otimes\beta$,
we have for all $h\in C(\Xx\times\Yy)$,
\[
\langle h,\pi^n\rangle
= \int h\,e^{(f_n\oplus g_n-c)/\eta}\,d(\alpha^n\otimes\beta^n)
\;\longrightarrow\;
\int h\,e^{(f\oplus g-c)/\eta}\,d(\alpha\otimes\beta)
= \langle h,\pi\rangle.
\]
\end{proof}

\begin{rem}
The proposition above is similar to Proposition 10 of \cite{sejourne2019sinkhorn}, however, strong convexity of the translation invariant operator and hence our continuity result  hold under much weaker assumptions: we only need the cost $c$ to be  Lipschitz  and compact domains.
\end{rem}

\subsection{Uniform compactness of the translation parameter}\label{subsec:uniform_translation_compactness}

Although the envelope formulation removes the gauge freedom, it still involves an optimization over an a priori unbounded translation parameter $\lambda$. For subsequent stability and sample complexity arguments, it is essential to control this remaining degree of freedom uniformly.

In this section, we show that under a mild domain condition on the marginal divergences, the translation parameter can in fact be restricted to a compact interval, uniformly over all admissible dual potentials. This condition is satisfied by most $\varphi$-divergences used in unbalanced OT and excludes only the balanced case, where translation invariance is exact and no restriction is needed.

Our result yields explicit bounds on the translation parameter and further implies uniform 
 bounds on the dual potentials themselves. These compactness properties play a key role in controlling empirical processes in the statistical analysis that follows.
\begin{assumption}[Bounded translations]\label{ass:bounded}
There exist $a,a'\in\mathrm{dom}(\varphi_1)$ and $b,b'\in\mathrm{dom}(\varphi_2)$ such that
\[
bm_\beta > am_\alpha,
\qquad
b'm_\beta < a'm_\alpha.
\]
\end{assumption}

\begin{rem}
    Assumption~\ref{ass:bounded} is a mild \emph{domain condition}: it asks for two pairs
$(a,b)$ and $(a',b')$ with $a,a'\in\mathrm{dom}(\varphi_1)$ and $b,b'\in\mathrm{dom}(\varphi_2)$ such that
$bm_\beta-am_\alpha>0$ and $b'm_\beta-a'm_\alpha<0$.
In particular, if $\mathrm{dom}(\varphi_1)$ and $\mathrm{dom}(\varphi_2)$ both contain an interval
(e.g.\ $(0,\infty)$, or $[0,\infty)$), then the assumption always holds: pick any $a\neq a'$ in
$\mathrm{dom}(\varphi_1)$ and choose $b>b':=\frac{m_\alpha}{m_\beta}a'$ in $\mathrm{dom}(\varphi_2)$
(and similarly $b<\frac{m_\alpha}{m_\beta}a$ for the other inequality). This covers the common
$\varphi$-divergences used in unbalanced OT, whose generators are finite on $(0,\infty)$, such as
\emph{KL} ($\varphi(p)=p\log p-p+1$), \emph{$\chi^2$} ($\varphi(p)=(p-1)^2$), \emph{Hellinger}
($\varphi(p)=(\sqrt p-1)^2$), \emph{Jensen--Shannon} (an $f$-divergence with finite generator on $(0,\infty)$),
and more generally the \emph{$\alpha$-divergences} (power divergences), all of which are standard $f$-divergences
with $\mathrm{dom}(\varphi)=(0,\infty)$ (or containing $(0,\infty)$). 
In contrast, for \emph{balanced OT} the marginal penalties are hard constraints,
corresponding to the indicator divergence $\iota_{\{1\}}$ (i.e.\ $\pi_1=\alpha$ and $\pi_2=\beta$); then the dual
has the usual gauge invariance and the envelope minimization is trivial, effectively $R=0$ (no translation search).

\end{rem}

The next proposition shows that under Assumption~\ref{ass:bounded}, one can further restrict
the optimization over $\lambda$ in $\Tt$ to a compact interval.
Our proof relies on the Fenchel--Young inequality. While related compactness results appear in
\cite{sejourne2023unbalanced}, the proof strategies there depend on the particular divergence:
one argument applies when $\varphi^\ast$ is strictly convex and admits subgradients converging to
$0$ or $\infty$ \cite[Lemma~5]{sejourne2023unbalanced}, while the TV and range divergences are handled
separately. In contrast, Assumption~\ref{ass:bounded}  covers all these cases.

\begin{thm}[Restriction to bounded translations with explicit constants]\label{thm:restrictCompactSets}
Assume Assumptions~\ref{ass:compact} and~\ref{ass:bounded}.
Let $\Pp_0^{\alpha,\beta}$ be the anchored compact set from
Theorem~\ref{thm:compact_anchor}, and let $M, U>0$ be the 
constants there.
There exists $R$ such that 
\begin{equation}\label{eq:restrict_lambda_interval}
\inf_{f,g\in\Pp_0^{\al,\beta}}\Tt^{\alpha,\beta}(f,g)
= \inf_{f,g\in\Pp_0^{\al,\beta}}
\min_{\lambda\in[-R,R]}\Kk^{\alpha,\beta}(f+\lambda,g-\lambda).
\end{equation}
Consequently, one can restrict the minimization of $\Kk^{\alpha,\beta}$ to $(f,g)$ such that 
\[
\|f\|_\infty\le M+R,
\qquad
\|g\|_\infty\le M+R.
\]

Concretely, we can 
define 
\[
s_+ \coloneqq bm_\beta-am_\alpha>0,
\qquad
s_- \coloneqq a'm_\alpha-b'm_\beta>0,
\]
and the constants
\begin{align*}
C_{a,b}(M)
&\coloneqq
|a|\,M\,m_\alpha+\varphi_1(a)\,m_\alpha
+|b|\,M\,m_\beta+\varphi_2(b)\,m_\beta,\\
C_{a',b'}(M)
&\coloneqq
|a'|\,M\,m_\alpha+\varphi_1(a')\,m_\alpha
+|b'|\,M\,m_\beta+\varphi_2(b')\,m_\beta.
\end{align*}
Then the quantity
\begin{equation}\label{eq:R_explicit}
R \;\coloneqq\;
\max\Big\{
\frac{U+C_{a,b}(M)}{s_+},
\frac{U+C_{a',b'}(M)}{s_-}
\Big\}
\end{equation}
\end{thm}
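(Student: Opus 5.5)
The plan is to bound $\Kk^{\alpha,\beta}(f+\lambda,g-\lambda)$ from below by an affine function of $|\lambda|$ with slope $s_\pm$, uniformly over $(f,g)\in\Pp_0^{\alpha,\beta}$. Comparing this with the upper bound $U$ on the optimal value then rules out $|\lambda|>R$.

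\textbf{Step 1 (lower bound for $\lambda\to+\infty$).} We drop the nonnegative entropic term $\eta\langle e^{(f\oplus g-c)/\eta},\alpha\otimes\beta\rangle\ge 0$. Apply Fenchel--Young pointwise with the pair $(a,b)$:
\[
\varphi_1^*(-f-\lambda)\ge -a(f+\lambda)-\varphi_1(a),\qquad
\varphi_2^*(-g+\lambda)\ge -b(g-\lambda)-\varphi_2(b).
\]
Integrating against $\alpha,\beta$ and using $\|f\|_\infty,\|g\|_\infty\le M$ gives
\[
\Kk^{\alpha,\beta}(f+\lambda,g-\lambda)\ge \lambda\,(bm_\beta-am_\alpha)-C_{a,b}(M)=\lambda s_+-C_{a,b}(M).
\]
We need to check the sign convention: $-a\lambda m_\alpha+b\lambda m_\beta=\lambda s_+$, so this grows as $\lambda\to+\infty$. \textbf{Step 2} is symmetric with $(a',b')$: we get $\Kk\ge -\lambda s_- - C_{a',b'}(M)$, which grows as $\lambda\to-\infty$.

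\textbf{Step 3 (comparison with the optimum).} By Theorem~\ref{thm:compact_anchor}, $\inf_{\Pp_0}\Tt=\min\Kk\le U$. Consider any $(f,g)\in\Pp_0^{\alpha,\beta}$ and $\lambda$ with $\lambda>R$ (resp.\ $\lambda<-R$). Steps 1--2 give $\Kk(f+\lambda,g-\lambda)>U\ge\inf_{\Pp_0}\Tt$, using $\lambda s_+-C_{a,b}>U$ exactly when $\lambda>(U+C_{a,b})/s_+$. Hence such translates never compete with near-minimizers. More precisely, for any $\epsilon$-minimizing pair we may replace $\inf_\lambda$ by the infimum over $[-R,R]$; since $\lambda\mapsto\Kk(f+\lambda,g-\lambda)$ is convex and lower semicontinuous on a compact interval, this infimum is attained. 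This yields \eqref{eq:restrict_lambda_interval}. The inequality "$\ge$" is trivial, because restricting $\lambda$ can only increase the value.

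\textbf{Step 4.} The potentials $f+\lambda$, $g-\lambda$ with $(f,g)\in\Pp_0^{\alpha,\beta}$ and $|\lambda|\le R$ satisfy the stated $L^\infty$ bounds $M+R$. Since minimizers of $\Kk$ are translates of minimizers of $\Tt$ over $\Pp_0$, the minimization of $\Kk$ may be restricted to this set.

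\textbf{Main obstacle.} The delicate point is Step 3. The lower bound must be compared to the global infimum rather than to $\Tt(f,g)$ for the particular $(f,g)$, because a given $(f,g)$ may have $\Tt(f,g)$ large. Consequently the restriction holds at the level of the infimum over $\Pp_0$, not pointwise in $(f,g)$. This is exactly how the statement is phrased, so I would argue with minimizing sequences.

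A second subtlety is that $a,b$ are only in the domains. Fenchel--Young therefore needs $\varphi_i(a),\varphi_i(b)<\infty$, which is the definition of the domain. We also need $|f|\le M$ rather than the sharper bound $L\,\diam(\Xx)$; the definition of $\Pp_0$ supplies this.
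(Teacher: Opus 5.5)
Your proposal is correct and follows the paper's proof: you apply Fenchel--Young with the pairs $(a,b)$ and $(a',b')$, drop the nonnegative entropic term, bound the linear terms via $\|f\|_\infty,\|g\|_\infty\le M$, and compare the resulting slopes $s_\pm$ with $U$. Your remark that the restriction holds only at the level of the infimum is more careful than the paper's intermediate claim that $\Tt^{\alpha,\beta}\le U$ on all of $\Pp_0^{\alpha,\beta}$.

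One point needs tidying: the borderline case $\inf_{\Pp_0^{\alpha,\beta}}\Tt^{\alpha,\beta}=U$, which is allowed since $U=\inf\Kk^{\alpha,\beta}$ is admissible. There, $\epsilon$-minimizers with a shift slightly beyond $R$ can compete. Argue instead, as the paper does, with an actual minimizing triple $(\lambda,f,g)$, whose value is at most $U$ and hence forces $|\lambda|\le R$. Such a triple exists: the outer minimum is attained by Theorem~\ref{thm:compact_anchor}, and the inner one because $\lambda\mapsto\Kk^{\alpha,\beta}(f+\lambda,g-\lambda)$ is lower semicontinuous and coercive by your two linear bounds.
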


\begin{proof}
Fix $(f,g)$ and abbreviate $\Kk=\Kk^{\alpha,\beta}$.
Starting from the Fenchel--Young inequality on $\varphi_1^*$, $\varphi_2^*$, for any $\pi_1\ll\alpha$ and $\pi_2\ll\beta$,
\begin{equation}\label{eq:K_lambda_lb_rewrite}
\begin{split}
\Kk(f+\lambda,g-\lambda)\ge\;
&-\langle f,\pi_1\rangle-\langle g,\pi_2\rangle
+\eta\Big\langle\exp\!\Big(\frac{f\oplus g-c}{\eta}\Big),\alpha\otimes\beta\Big\rangle\\
&\quad
+\lambda\bigl(m_{\pi_2}-m_{\pi_1}\bigr)
-\Big\langle \varphi_1\!\Big(\frac{d\pi_1}{d\alpha}\Big),\alpha\Big\rangle
-\Big\langle \varphi_2\!\Big(\frac{d\pi_2}{d\beta}\Big),\beta\Big\rangle,
\end{split}
\end{equation}
where $m_{\pi_i}$ denotes the total mass of $\pi_i$.

\medskip
\noindent\textbf{Step 1: One-sided linear coercivity with explicit intercepts.}
Choose $\pi_1=a\alpha$ and $\pi_2=b\beta$ with $a\in\mathrm{dom}(\varphi_1)$ and
$b\in\mathrm{dom}(\varphi_2)$. Then $m_{\pi_1}=am_\alpha$, $m_{\pi_2}=bm_\beta$,
and \eqref{eq:K_lambda_lb_rewrite} gives
\begin{equation}\label{eq:K_lambda_lb_ab_rewrite}
\begin{split}
\Kk(f+\lambda,g-\lambda)\ge\;
&s_+\,\lambda
-\langle af,\alpha\rangle-\varphi_1(a)m_\alpha
-\langle bg,\beta\rangle-\varphi_2(b)m_\beta\\
&\quad
+\eta\Big\langle\exp\!\Big(\frac{f\oplus g-c}{\eta}\Big),\alpha\otimes\beta\Big\rangle .
\end{split}
\end{equation}
Since the exponential term is nonnegative, we may drop it to obtain a simpler bound.
If additionally $(f,g)\in\Pp_0^{\alpha,\beta}$, then $\|f\|_\infty,\|g\|_\infty\le M$, hence
\[
-\langle af,\alpha\rangle \ge -|a|\int |f|\,d\alpha \ge -|a|M m_\alpha,
\qquad
-\langle bg,\beta\rangle \ge -|b|M m_\beta,
\]
and therefore from \eqref{eq:K_lambda_lb_ab_rewrite},
\begin{equation}\label{eq:K_lambda_lb_ab_simplified}
\Kk(f+\lambda,g-\lambda)\ge s_+\,\lambda - C_{a,b}(M),
\qquad \forall\,\lambda\in\RR.
\end{equation}

Similarly, choosing $\pi_1=a'\alpha$ and $\pi_2=b'\beta$ yields
\begin{equation}\label{eq:K_lambda_lb_a'b'_simplified}
\Kk(f+\lambda,g-\lambda)\ge -s_-\,\lambda - C_{a',b'}(M),
\qquad \forall\,\lambda\in\RR,
\end{equation}
again for all $(f,g)\in\Pp_0^{\alpha,\beta}$.

\medskip
\noindent\textbf{Step 2: Uniform restriction of minimizers of $(\lambda,f,g)\mapsto \Kk(f+\lambda,g-\lambda)$.}
For any minimizing $(\lambda, f,g)$ we have
\[
\Kk(f+\lambda,g-\lambda)\le U,
\]
with $U$ as in Theorem \ref{thm:compact_anchor}.
Consequently,  $\Tt(f,g)\le U$ for all $(f,g)\in\Pp_0^{\alpha,\beta}$.

Now if $\lambda\ge 0$, by
\eqref{eq:K_lambda_lb_ab_simplified},
\[
U\geq \Kk(f+\lambda,g-\lambda)\ge s_+\lambda-C_{a,b}(M),
\]
so no minimizer over $\lambda\in\RR$ can lie beyond $(U+C_{a,b}(M))/s_+$ on the positive side.
Similarly, by
\eqref{eq:K_lambda_lb_a'b'_simplified},
\[
U\geq \Kk(f+\lambda,g-\lambda)\ge -s_-\lambda-C_{a',b'}(M),
\]
so no minimizer can lie beyond $-(U+C_{a',b'}(M))/s_-$ on the negative side.
Therefore \eqref{eq:restrict_lambda_interval} holds with
\[
R \coloneqq
\max\Big\{
\frac{U+C_{a,b}(M)}{s_+},
\frac{U+C_{a',b'}(M)}{s_-}
\Big\}.
\]

\end{proof}

\section{Sample complexity}\label{sec:sample_complexity}

We now turn the deterministic geometry of Section~\ref{sec:translation_invariant_formulation} into a statistical guarantee. The object of interest is the transport plan itself: given empirical marginals $\alpha^n,\beta^n$, we want to control the deviation between the empirical UOT coupling and the population coupling through bounded test functions. The argument is organized in two steps. Subsection~\ref{subsec:translation_minimizer_stability} isolates the one-dimensional stability mechanism for the scalar translation parameter. Subsection~\ref{subsec:plan_sample_complexity} then combines this mechanism with the strong convexity of the anchored envelope and the concentration estimates of Section~\ref{sec:concentration_bounds} to prove the plan-level sample-complexity theorem.

The deterministic input for the section is the compact anchoring and translation control obtained above. Under Assumptions~\ref{ass:compact} and~\ref{ass:bounded}, Theorem~\ref{thm:restrictCompactSets} gives a deterministic interval $[-R,R]$ such that
\begin{equation}\label{eq:trans_inv_compact}
\inf_{f,g\in\Pp_0^{\al,\beta}}\Tt^{\alpha,\beta}(f,g)
= \inf_{f,g\in\Pp_0^{\al,\beta}}
\min_{\lambda\in[-R,R]}\Kk^{\alpha,\beta}(f+\lambda,g-\lambda).
\end{equation}
In the balanced case the gauge can be fixed directly and we take $R=0$. The additional curvature and subgradient regularity assumptions needed for the final theorem are therefore introduced only in Subsection~\ref{subsec:plan_sample_complexity}; the preliminary proposition below only needs local conditions on the one-dimensional scalar problem.

\subsection{Stability of the translation minimizer}\label{subsec:translation_minimizer_stability}

The translation-invariant envelope separates the intrinsic potentials from the scalar shift $\lambda$. Once the anchored potentials are fixed, the only remaining scalar degree of freedom is the minimizer of the function $\lambda\mapsto \Kk(f+\lambda,g-\lambda)$. The empirical objective may choose a different minimizer, and the next proposition shows that this displacement is small whenever the empirical scalar objective has curvature. This one-dimensional estimate is the bridge between the deterministic translation control of Theorem~\ref{thm:restrictCompactSets} and the comparison of empirical and population dual certificates.

\begin{prop}[Stability of the translation minimizer]\label{prop:lambda_stability}
Fix bounded $f,g$ with $\|f\|_\infty,\|g\|_\infty\le M$ and define
\[
G(\lambda)\coloneqq \Kk^{\alpha,\beta}(f+\lambda,g-\lambda),
\qquad
G_n(\lambda)\coloneqq \Kk^{\alpha^n,\beta^n}(f+\lambda,g-\lambda),
\quad \lambda\in[-R,R].
\]
Assume that $G_n$ is $\gamma$--strongly convex on $[-R,R]$, and let
$$\lambda^\star\in\arg\min_{[-R,R]} G(\lambda)\qandq \lambda_n^\star\in\arg\min_{[-R,R]} G_n(\lambda).$$
Suppose moreover that $\lambda^\star$ admits measurable stationarity selections
\[
\psi_1^\star(\cdot)\in \partial\varphi_1^*(-f(\cdot)-\lambda^\star),
\qquad
\psi_2^\star(\cdot)\in \partial\varphi_2^*(-g(\cdot)+\lambda^\star),
\]
satisfying
\[
-\int \psi_1^\star\,d\alpha+\int \psi_2^\star\,d\beta=0,
\qquad
|\psi_1^\star|\le B_1,\quad |\psi_2^\star|\le B_2 .
\]
Then, for all $t>0$, with probability at least $1-e^{-t}$,
\[
|\lambda_n^\star-\lambda^\star|
\;\lesssim\;
\frac{m_\alpha B_1+m_\beta B_2}{\gamma}\,\sqrt{\frac{t}{n}}.
\]
\end{prop}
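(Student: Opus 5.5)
The approach is a standard strong-convexity-plus-first-order-optimality argument for a one-dimensional problem, followed by a Hoeffding bound on an empirical average of bounded quantities.

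\textbf{Step 1 (strong convexity inequality).} Since $G_n$ is $\gamma$-strongly convex on $[-R,R]$ and $\lambda_n^\star$ minimizes it there, for any subgradient $s\in\partial G_n(\lambda^\star)$ (including the normal-cone contribution of the interval constraint) we have
\[
\gamma|\lambda_n^\star-\lambda^\star|^2\le \langle s,\lambda^\star-\lambda_n^\star\rangle .
\]
Hence $\gamma|\lambda_n^\star-\lambda^\star|\le |s|$ for a suitably chosen element $s$ of $\partial G_n(\lambda^\star)$. So it suffices to exhibit one small element of this subdifferential.

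\textbf{Step 2 (build the subgradient from the population certificate).} I read the translation convention of the stationarity selections as matching $G$: the marginal terms are $\varphi_1^*(-f-\lambda)$ and $\varphi_2^*(-g+\lambda)$. The exponential term of $\Kk$ depends only on $f\oplus g$, so it is $\lambda$-invariant and drops out of both $G$ and $G_n$. By the chain rule for convex integrands, the same measurable selections $\psi_1^\star,\psi_2^\star$, evaluated at the fixed functions $f,g$, give
\[
s_n\coloneqq -\int\psi_1^\star\,d\alpha^n+\int\psi_2^\star\,d\beta^n\in\partial G_n(\lambda^\star).
\]
This requires only pointwise membership of $\psi_i^\star$ in the subdifferentials, which does not depend on the measure. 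The assumed stationarity for $G$ reads $-\int\psi_1^\star d\alpha+\int\psi_2^\star d\beta=0$. Therefore
\[
s_n=-\int\psi_1^\star\,d(\alpha^n-\alpha)+\int\psi_2^\star\,d(\beta^n-\beta).
\]
If $\lambda^\star$ lies on the boundary of $[-R,R]$, Theorem~\ref{thm:restrictCompactSets} shows the constraint is inactive for the population problem. Alternatively, the normal-cone term can be absorbed, since it only helps the inequality in Step 1.

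\textbf{Step 3 (concentration).} Take $\alpha^n=\frac{m_\alpha}{n}\sum_i\delta_{x_i}$ with i.i.d.\ samples from $\alpha/m_\alpha$, and similarly for $\beta^n$. Then $\int\psi_1^\star d(\alpha^n-\alpha)$ is $m_\alpha$ times a centered empirical mean of variables bounded by $B_1$, and likewise for the second term with $m_\beta B_2$. Because $f,g,\lambda^\star$ are deterministic, the selections $\psi_i^\star$ are fixed functions and no uniform or empirical-process argument is needed. Hoeffding's inequality with a union bound over the two terms gives, with probability at least $1-e^{-t}$,
\[
|s_n|\lesssim (m_\alpha B_1+m_\beta B_2)\sqrt{t/n}.
\]
Dividing by $\gamma$ concludes the proof.

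\textbf{Main obstacle.} The delicate point is Step 2. One must justify that $s_n$ genuinely belongs to $\partial G_n(\lambda^\star)$ (an interchange of subdifferential and integral for a convex normal integrand with bounded selections), and one must treat the boundary case of the constrained minimization cleanly. I would handle both by a direct convexity argument rather than a subdifferential calculus theorem. For every $\lambda$,
\[
\varphi_1^*(-f-\lambda)\ge\varphi_1^*(-f-\lambda^\star)-\psi_1^\star(\lambda-\lambda^\star)
\]
pointwise, with the analogous inequality for $\varphi_2^*$. Integrating against $\alpha^n,\beta^n$ yields
\[
G_n(\lambda)\ge G_n(\lambda^\star)+s_n(\lambda-\lambda^\star),
\]
which is exactly what Step 1 uses. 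Combining this with strong convexity of $G_n$ evaluated at its minimizer $\lambda_n^\star$ gives the required inequality without any regularity beyond that assumed.
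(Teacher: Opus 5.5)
Your proposal is correct and follows the paper's proof: build $s_n\in\partial G_n(\lambda^\star)$ from the population stationarity selections, use $\gamma$-strong convexity of $G_n$ to get $|\lambda_n^\star-\lambda^\star|\lesssim|s_n|/\gamma$, and conclude with Hoeffding. Your pointwise-convexity check that $G_n(\lambda)\ge G_n(\lambda^\star)+s_n(\lambda-\lambda^\star)$, together with quadratic growth at the constrained minimizer, makes rigorous what the paper states in one line. The appeal to Theorem~\ref{thm:restrictCompactSets} for the boundary case is unnecessary, and that theorem does not guarantee it for arbitrary fixed $f,g$: the hypothesis already gives unconstrained stationarity at $\lambda^\star$.
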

\begin{proof}
Since the entropic term is invariant under $(f,g)\mapsto(f+\lambda,g-\lambda)$,
a subgradient of $G$ at $\lambda$ is given by
\[
\partial G(\lambda)\ni -\int \psi_1\,d\alpha + \int \psi_2\,d\beta,
\quad
\psi_1(\cdot)\in \partial\varphi_1^*(-f(\cdot)-\lambda),\ 
\psi_2(\cdot)\in \partial\varphi_2^*(-g(\cdot)+\lambda).
\]
Using the stationarity selections at $\lambda^\star$, the corresponding empirical subgradient satisfies
\[
s_n \coloneqq -\int \psi_1^\star\,d(\alpha^n-\alpha)+\int \psi_2^\star\,d(\beta^n-\beta)
\in \partial G_n(\lambda^\star).
\]
By $\gamma$--strong convexity of $G_n$ on $[-R,R]$,
\[
|\lambda_n^\star-\lambda^\star|
\le \frac{1}{\gamma}\,\mathrm{dist}\!\bigl(0,\partial G_n(\lambda^\star)\bigr)
\le \frac{1}{\gamma}|s_n|.
\]
Hoeffding's inequality and the bounds
$|\psi_1^\star|\le B_1$, $|\psi_2^\star|\le B_2$ yield the stated rate.

\end{proof}

\subsection{Plan-level sample complexity}\label{subsec:plan_sample_complexity}

We now impose the uniform assumptions that make the preceding scalar estimate usable along the empirical and population minimizers. The first condition bounds the marginal subgradients on the compact range of admissible translated potentials. The second supplies curvature in the translation direction, preventing an almost-flat scalar mode from amplifying sampling noise. The third transfers marginal subgradient certificates from the population translation to the empirical one. These requirements are automatic in the balanced gauge-fixed case, and they hold for the standard smooth strictly convex unbalanced divergences on compact potential ranges.

\begin{assumption}[Translation curvature and subgradient regularity]\label{ass:strong_conv}
Let $R\in [0,\infty)$ be the translation radius from Theorem~\ref{thm:restrictCompactSets}
in the unbalanced case, and set $R\coloneqq 0$ in the balanced case.
Let $M>0$ be the uniform $L^\infty$ bound on dual potentials from Theorem~\ref{thm:restrictCompactSets},
and define
\[
I_{M,R}\coloneqq [-(M+R),\, M+R].
\]

Assume the following conditions.
\begin{enumerate}
\item[(i)] \textbf{Bounded subgradients on the relevant range.}
For $i=1,2$,
\[
B_i \coloneqq \sup_{u\in I_{M,R}}\,\sup_{v\in \partial \varphi_i^*(u)} |v| \;<\; \infty.
\]

\item[(ii)] \textbf{Strong convexity in the translation direction.}
At least one of $\varphi_1^*$ or $\varphi_2^*$ is strongly convex on compact sets. If we can set $R=0$ in \eqref{eq:restrict_lambda_interval}, then this condition is not required.

\item[(iii)] \textbf{Local Lipschitz property of marginal subgradients.}
For each $i\in\{1,2\}$, there exists $L_i<\infty$ such that, for all $p,q\in I_{M,R}$ with nonempty $\partial\varphi_i^*(p)$ and $\partial\varphi_i^*(q)$, every
$\psi\in \partial\varphi_i^*(p)$ admits a companion $\tilde\psi\in\partial\varphi_i^*(q)$ satisfying
\[
|\tilde\psi-\psi|\le L_i|p-q|.
\]
\end{enumerate}
\end{assumption}

\begin{rem*}
Assumption~\ref{ass:strong_conv}(iii) holds if $\varphi_i^*\in C^{1,1}$ on a neighborhood of $I_{M,R}$
(with $L_i=\mathrm{Lip}(\nabla\varphi_i^*)$), and it also holds with $L_i=0$ whenever
$\partial\varphi_i^*(u)$ is a singleton independent of $u$ on $I_{M,R}$ (e.g.\ affine/indicator-type conjugates arising from TV penalties on the relevant range).
\end{rem*}

\begin{rem*}
In the balanced OT setting, the dual objective is invariant under $(f,g)\mapsto(f+\lambda,g-\lambda)$,
and we fix the gauge (e.g.\ $\int f\,d\alpha=0$). Equivalently, the envelope optimization has $R=0$,
so Assumption~\ref{ass:strong_conv} is satisfied trivially.
Assumption~\ref{ass:strong_conv} (ii) holds for KL and, more broadly,
for smooth strictly convex $f$-divergences (e.g.\ $\chi^2$, Hellinger, Jensen--Shannon, and $\alpha$-divergences):
on any compact interval, their conjugates have a strictly positive second derivative and hence a positive strong convexity
modulus. 
\end{rem*}

Under Assumption~\ref{ass:strong_conv}, the empirical envelope is stable in the anchored variables and the remaining translation drift is controlled by Proposition~\ref{prop:lambda_stability}. The theorem below converts these dual estimates into a high-probability bound for the optimal coupling through the exponential formula \eqref{RadonDensity}.

\begin{thm}[Sample complexity for unbalanced entropic OT]\label{SampleComplextiyuOTTHeorem}
Assume Assumptions~\ref{ass:compact}, \ref{ass:bounded}, and~\ref{ass:strong_conv}.
Let $(z_i,w_i)_{i=1}^n \overset{iid}{\sim} \xi$, where $\xi$ has marginals
$\xi_1=\alpha/m_\alpha$ and $\xi_2=\beta/m_\beta$.
Let $\pi^\star$ denote the UOT solution of \eqref{eq:UOT2}, and let $\pi^n$ denote the UOT solution
with $(\alpha,\beta)$ replaced by the empirical measures
\[
\alpha^n := \frac{m_\alpha}{n}\sum_{i=1}^n \delta_{z_i},
\qquad
\beta^n := \frac{m_\beta}{n}\sum_{j=1}^n \delta_{w_j}.
\]

Then for all $t>0$ and all $h \in C_c(X\times Y)$, with probability at least $1-e^{-t}$,
\begin{equation}\label{eq:samp-comp-forward}
\big| \langle h, \pi^\star-\pi^n\rangle \big|
\;\le\;
C_1\,e^{C_2(M+\|c\|_\infty)/\eta}\,
\|h\|_{L^\infty(\alpha\otimes \beta)}
\sqrt{\frac{m_\alpha m_\beta(\log n+t)}{n}},
\end{equation}
where $C_1>0$ depends only on $m_\alpha,m_\beta,\eta$, and the constants in
Assumptions~\ref{ass:bounded}--\ref{ass:strong_conv}, and $C_2>0$ is a numerical constant.

If $\xi=\frac{1}{m_\alpha m_\beta}\alpha\otimes\beta$, the $\log n$ factor can be removed.
\end{thm}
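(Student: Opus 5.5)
The plan is to follow the strong-convexity route of \cite{rigollet2022sample}. We compare the population and empirical dual solutions in a common anchored parametrization, control their distance through Theorem~\ref{thm:compact_anchor}, control the scalar translation through Proposition~\ref{prop:lambda_stability}, and then push the dual error through the exponential formula \eqref{RadonDensity}.

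\textbf{Step 1: representations of the two plans.} Let $(f,g)\in\Pp_0^{\alpha,\beta}$ and $(f_n,g_n)\in\Pp_0^{\alpha^n,\beta^n}$ be the anchored minimizers of $\Tt^{\alpha,\beta}$ and $\Tt^{\alpha^n,\beta^n}$. Let $\lambda^\star,\lambda_n^\star\in[-R,R]$ be the optimal translations given by Theorem~\ref{thm:restrictCompactSets}. The plan density depends only on $f\oplus g$, so translations cancel and
\[
\pi^\star=e^{(f\oplus g-c)/\eta}\,\alpha\otimes\beta,\qquad \pi^n=e^{(f_n\oplus g_n-c)/\eta}\,\alpha^n\otimes\beta^n.
\]
We re-anchor $(f_n,g_n)$ with respect to $\alpha$ as in Proposition~\ref{prop:convergence_primal_weak}, shifting by $\kappa_n=\int f_n\,d\alpha/m_\alpha$. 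This does not change $f_n\oplus g_n$, and the re-anchored pair lies in $\Pp_0^{\alpha,\beta}$ after enlarging $M$ by at most $2L\diam(\Xx)$. We then split
\[
\langle h,\pi^\star-\pi^n\rangle
=\int h\,e^{(f\oplus g-c)/\eta}\,d(\alpha\otimes\beta-\alpha^n\otimes\beta^n)
+\int h\,\bigl(e^{(f\oplus g-c)/\eta}-e^{(f_n\oplus g_n-c)/\eta}\bigr)\,d(\alpha^n\otimes\beta^n).
\]
The first term is a fluctuation of a fixed bounded function, of size $\|h\|_\infty e^{(2M+\|c\|_\infty)/\eta}$, against a product empirical measure. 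If $\xi=\alpha\otimes\beta/m$, this is a sum of i.i.d.\ terms and Hoeffding gives $\sqrt{t/n}$. For a general coupling $\xi$, we use a V-statistic/U-statistic decomposition with McDiarmid's inequality, plus a union bound over diagonal and off-diagonal terms. This costs the $\log n$ factor and matches the last sentence of the theorem. The second term is bounded by the mean-value theorem by $\eta^{-1}e^{C(M+\|c\|_\infty)/\eta}\|h\|_\infty\|(f-f_n)\oplus(g-g_n)\|_{L^1(\alpha^n\otimes\beta^n)}$. It therefore reduces to bounding the dual error. We pass from the empirical $L^1$ norm to the population $L^2$ norm via Cauchy--Schwarz plus one more concentration term; this is legitimate because $f-f_n$ and $g-g_n$ range over uniformly bounded Lipschitz classes.

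\textbf{Step 2: dual stability (the main obstacle).} Strong convexity of $\Tt^{\alpha,\beta}$ on $\Pp_0^{\alpha,\beta}$ (Theorem~\ref{thm:compact_anchor}) gives
\[
\tfrac{\mu}{2}\|(f_n-f,g_n-g)\|_{\alpha,\beta}^2\le \Tt^{\alpha,\beta}(f_n,g_n)-\Tt^{\alpha,\beta}(f,g).
\]
Using optimality of $(f_n,g_n)$ for $\Tt^{\alpha^n,\beta^n}$, the right-hand side is at most
\[
(\Tt^{\alpha,\beta}-\Tt^{\alpha^n,\beta^n})(f_n,g_n)-(\Tt^{\alpha,\beta}-\Tt^{\alpha^n,\beta^n})(f,g).
\]
This difference depends on the random $(f_n,g_n)$, so a naive bound would need uniform empirical-process control over Lipschitz classes, which yields a dimension-dependent rate. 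We instead follow \cite{rigollet2022sample} and linearize at the population solution. Because $\Tt$ contains an infimum over $\lambda$, we freeze $\lambda=\lambda^\star$ and use the subgradient of $\Kk^{\alpha^n,\beta^n}$ at $(f+\lambda^\star,g-\lambda^\star)$. That subgradient is $\bigl(-\psi_1^\star\alpha^n+\pi^\star_{n,1},\,-\psi_2^\star\beta^n+\pi^\star_{n,2}\bigr)$, where $\pi^\star_n$ is the population density placed on $\alpha^n\otimes\beta^n$ and $\pi^\star_{n,1},\pi^\star_{n,2}$ are its two marginals. Its population counterpart vanishes by first-order optimality. Strong convexity of the empirical objective, which holds by the same argument since $M$ is uniform, then gives $\|\text{error}\|\le \mu^{-1}\|\text{empirical gradient}\|_{\ast}$. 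This gradient is a sum of centered bounded terms with bounds $B_i$ from Assumption~\ref{ass:strong_conv}(i) and $e^{2M/\eta}$. Its squared norm in the dual of $L^2$ concentrates at rate $1/n$, for example by a Hilbert-space (Pinelis) bound or a U-statistic variance computation, which gives $\sqrt{t/n}$ with no dimension dependence.

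\textbf{Step 3: handling the translation shift.} The translation shift $\lambda_n^\star-\lambda^\star$ enters the argument because the empirical subgradient must be evaluated at the empirical optimal translation. It is controlled by Proposition~\ref{prop:lambda_stability}, with $\gamma$ taken from Assumption~\ref{ass:strong_conv}(ii). Assumption~\ref{ass:strong_conv}(iii) transfers the certificates $\psi_i^\star$ to companions at $\lambda_n^\star$, at an extra cost $L_i|\lambda_n^\star-\lambda^\star|$.

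\textbf{Step 4: assembly.} Collecting the constants $B_i$, $L_i$, $\gamma$, $\mu^{-1}=\eta e^{M/\eta}$ and the exponential factors into $C_1e^{C_2(M+\|c\|_\infty)/\eta}$ gives \eqref{eq:samp-comp-forward}, with the normalization $\sqrt{m_\alpha m_\beta}$ coming from the total mass of $\alpha\otimes\beta$. I expect the hardest point to be making the linearization in Step 2 rigorous despite the nonsmooth infimum over $\lambda$ in $\Tt$. This is precisely where Assumption~\ref{ass:strong_conv}(ii)--(iii) and Proposition~\ref{prop:lambda_stability} are needed.
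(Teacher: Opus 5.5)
Your architecture matches the paper's. It uses the same triangle split, the same Lipschitz reduction of the density difference to a dual error on $\alpha^n\otimes\beta^n$, and strong convexity of the \emph{empirical} envelope used at the population potentials. It then takes a subgradient at the empirical-optimal translation, compares it with the population KKT certificates through Proposition~\ref{prop:lambda_stability} and Assumption~\ref{ass:strong_conv}(iii), and concentrates the residual $\int p_\infty^\star(\cdot,y)\,d(\beta^n-\beta)(y)$.

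One step in your Step~1 fails as written, though. You pass from $\|(f-f_n)\oplus(g-g_n)\|_{L^1(\alpha^n\otimes\beta^n)}$ to a population $L^2$ norm ``plus one more concentration term''. Since $f-f_n$ depends on the sample, this needs a deviation bound uniform over a Lipschitz class. Such bounds do not decay at rate $n^{-1/2}$; in dimension $d\ge 3$ they are of order $n^{-1/d}$. This is exactly the obstruction you point out at the start of Step~2.

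The step is also unnecessary. Strong convexity of $\Tt^{\alpha^n,\beta^n}$ (Theorem~\ref{thm:compact_anchor} applied to $(\alpha^n,\beta^n)$) bounds the error directly in $\|\cdot\|_{\alpha^n,\beta^n}$, which is what Cauchy--Schwarz against $\alpha^n\otimes\beta^n$ needs. To use it, re-anchor the \emph{population} pair with respect to $\alpha^n$, as the paper does with $\kappa_n=\frac{1}{m_\alpha}\int f\,d\alpha^n$, rather than the empirical pair with respect to $\alpha$. Because $\int f\,d\alpha=0$, $\kappa_n$ is controlled by Hoeffding for a fixed function. The shift $\kappa_n$ then enters the certificate comparison alongside the translation drift.

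Be careful which translation you compare with $\lambda^\star$. It is the minimizer of $\lambda\mapsto\Kk^{\alpha^n,\beta^n}(f+\lambda,g-\lambda)$ at the \emph{population} potentials, which is what Proposition~\ref{prop:lambda_stability} controls. It is not the optimal translation of $(f_n,g_n)$, which is how your Step~1 defines $\lambda_n^\star$.

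Your accounting of the $\log n$ factor is also reversed. The first term concentrates at $\sqrt{t/n}$ for every coupling $\xi$: the diagonal contributes $O(1/n)$ and the off-diagonal part is a bounded U-statistic (Lemma~\ref{UstatisticsLemma}). The $\log n$ comes from the dual residual, because it must be measured in $L^2(\alpha^n)$, that is, evaluated at the sample points $z_i$.

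Under a general coupling, $w_i$ is correlated with $z_i$. Conditionally on $z_{1:n}$ the $w_j$ no longer have law $\beta/m_\beta$, so the summands are not conditionally centered and a Pinelis or vector-Hoeffding bound does not apply. A variance computation only gives a Chebyshev-type tail, not probability $1-e^{-t}$.

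The paper instead conditions on a single $z_i$, under which $(w_j)_{j\neq i}$ are i.i.d.\ with law $\beta/m_\beta$. It applies scalar Hoeffding and takes a union bound over $i$ (Proposition~\ref{ConcentrationOfGradientProp}); this is the source of $\sqrt{(\log n+t)/n}$. Only when $\xi=\alpha\otimes\beta/(m_\alpha m_\beta)$ does conditioning on $z_{1:n}$ give $\sqrt{t/n}$. With these two repairs your argument coincides with the paper's and yields the stated rate.
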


\begin{proof}[Proof of Theorem \ref{SampleComplextiyuOTTHeorem}]
Let
\[
p_\infty^\star := \frac{d\pi^\star}{d(\alpha\otimes\beta)},
\qquad
p_n^\star := \frac{d\pi^n}{d(\alpha^n\otimes\beta^n)}.
\]
Let $(f_\infty^\star,g_\infty^\star)\in\Pp_0^{\alpha,\beta}$ be a minimizer of the population envelope
$\Tt^{\alpha,\beta}$, and let $(f_n^\star,g_n^\star)$ be an optimal dual pair for $\Kk_n$.
By Theorem~\ref{thm:restrictCompactSets}, all these potentials are uniformly bounded in $L^\infty$:
there exists $M>0$  such that
\begin{equation}\label{eq:sup_bounds_all}
\|f_\infty^\star\|_\infty,\|g_\infty^\star\|_\infty,\|f_n^\star\|_\infty,\|g_n^\star\|_\infty \le M.
\end{equation}
In particular,
\begin{equation}\label{eq:p_infty_sup_bd}
\|p_\infty^\star\|_{L^\infty(\alpha\otimes\beta)}
\le \exp\!\Big(\frac{2M+\|c\|_\infty}{\eta}\Big).
\end{equation}

\medskip

By the triangle inequality,
\begin{align}
\big| \langle h, \pi^\star-\pi^n\rangle \big|
&\le
\big|\langle h p_\infty^\star, \alpha\otimes\beta-\alpha^n\otimes\beta^n\rangle\big|
+\|h\|_\infty
\big|\langle p_\infty^\star-p_n^\star,\alpha^n\otimes\beta^n\rangle\big|.
\label{eq:tri_decomp_final}
\end{align}

To control the first term on the RHS of \eqref{eq:tri_decomp_final}:
Apply Lemma~\ref{UstatisticsLemma} to
$\tilde h = hp_\infty^\star-\langle hp_\infty^\star,\alpha\otimes\beta\rangle/(m_\alpha m_\beta)$. 
Using $\|\tilde h\|_\infty\le 2\|h\|_\infty\|p_\infty^\star\|_\infty$ and \eqref{eq:p_infty_sup_bd}, we obtain:
for all $t>0$, with probability at least $1-e^{-t}$,
\begin{equation}\label{eq:term1_conc}
\big|\langle h p_\infty^\star, \alpha\otimes\beta-\alpha^n\otimes\beta^n\rangle\big|
\;\lesssim\;
\|h\|_\infty\,\exp\!\Big(\frac{2M+\|c\|_\infty}{\eta}\Big)\, m_\alpha m_\beta\;
\sqrt{\frac{t}{n}}.
\end{equation}

\medskip
It remains to bound the second term on the RHS of \eqref{eq:tri_decomp_final}.

\medskip
\textit{Lipschitz control of the primal densities.}
On the interval $[-(2M+\|c\|_\infty),\,2M+\|c\|_\infty]$, the map $u\mapsto e^{u/\eta}$ is
$\eta^{-1}\exp((2M+\|c\|_\infty)/\eta)$--Lipschitz. Hence, for all $(x,y)$,
\[
|p_\infty^\star(x,y)-p_n^\star(x,y)|
\le
\eta^{-1}\exp\!\Big(\frac{2M+\|c\|_\infty}{\eta}\Big)
\Big(|f_\infty^\star(x)-f_n^\star(x)|+|g_\infty^\star(y)-g_n^\star(y)|\Big).
\]
Integrating against $\alpha^n\otimes\beta^n$ and using Cauchy--Schwarz yields
\begin{equation}\label{eq:density_reduce_final}
\big|\langle p_\infty^\star-p_n^\star,\alpha^n\otimes\beta^n\rangle\big|
\;\lesssim\;
\eta^{-1}\exp\!\Big(\frac{2M+\|c\|_\infty}{\eta}\Big)\,
\|(f_\infty^\star-f_n^\star,g_\infty^\star-g_n^\star)\|_{\alpha^n,\beta^n}.
\end{equation}

\medskip

\noindent\emph{Empirical centering of the population potentials.}
Define the empirical centering constant and centered potentials 
\begin{equation}\label{eq:center_pop_alpha_n}
\kappa_n \coloneqq \frac{1}{m_\alpha} \int f_\infty^\star\,d\alpha^n,
\qquad
\bar f_\infty^\star \coloneqq f_\infty^\star-\kappa_n,
\qquad
\bar g_\infty^\star \coloneqq g_\infty^\star+\kappa_n .
\end{equation}
Then $\int \bar f_\infty^\star\,d\alpha^n=0$ and
\[
\|\bar f_\infty^\star\|_\infty\le 2M,\qquad \|\bar g_\infty^\star\|_\infty\le 2M,
\qquad
(\bar f_\infty^\star\oplus\bar g_\infty^\star)=(f_\infty^\star\oplus g_\infty^\star).
\]
In particular, the density $p_\infty^\star$ remains the same when expressed via
$(\bar f_\infty^\star,\bar g_\infty^\star)$. So, we can replace $(f^\star_\infty, g^\star_\infty)$ in \eqref{eq:density_reduce_final} by their centred counterparts $(\bar f_\infty^\star,\bar g_\infty^\star)$

Moreover, since $\int f_\infty^\star\,d\alpha=0$ (population anchoring), we have
$\kappa_n=\int f_\infty^\star\,d(\alpha^n-\alpha)$, and Hoeffding's inequality gives: for all $t>0$,
with probability at least $1-e^{-t}$,
\begin{equation}\label{eq:kappa_conc}
|\kappa_n|
\;\lesssim\;
M m_\alpha\sqrt{\frac{t}{n}}.
\end{equation}

\smallskip
\noindent\emph{Empirical envelope and its minimizers.}
Define
\[
\Tt_n(f,g)\coloneqq \min_{\lambda\in[-R,R]} \Kk_n(f+\lambda,g-\lambda),
\qquad (f,g)\in\Pp_0^{\alpha^n,\beta^n},
\]
where $R$ is the deterministic translation bound from Theorem~\ref{thm:restrictCompactSets}.

By Theorem~\ref{thm:compact_anchor} (applied with $\alpha^n,\beta^n$) the envelope $\Tt_n$ is $\mu$--strongly convex on
$\Pp_0^{\alpha^n,\beta^n}$ w.r.t.\ $\|\cdot\|_{\alpha^n,\beta^n}$ with
\[
\mu=\eta^{-1}e^{-M/\eta}.
\]
Therefore, since $( f_n^\star, g_n^\star)$ minimize  $\Tt_n$,
\begin{equation}\label{eq:PL_Tn_patched}
\|( f_n^\star-\bar f_\infty^\star, g_n^\star -\bar g_\infty^\star)\|_{\alpha^n,\beta^n}
\ \le\
\frac{1}{\mu}\,
\operatorname{dist}\bigl(0,\partial \Tt_n(\bar f_\infty^\star,\bar g_\infty^\star)\bigr).
\end{equation}

\noindent\emph{Danskin--Valadier: reducing $\partial \Tt_n$ to $\partial \Kk_n$ at a minimizing shift.}
Set $\Phi_\lambda(f,g)\coloneqq \Kk_n(f+\lambda,g-\lambda)$, so $\Tt_n(f,g)=\min_{\lambda\in[-R,R]}\Phi_\lambda(f,g)$.
For each fixed $\lambda$, $\Phi_\lambda$ is convex in $(f,g)$, and $[-R,R]$ is compact, hence Danskin--Valadier gives
\[
\partial \Tt_n(f,g)
=
\operatorname{co}\Big(\bigcup_{\lambda\in\Lambda_n(f,g)} \partial \Phi_\lambda(f,g)\Big),
\qquad
\Lambda_n(f,g)\coloneqq \arg\min_{\lambda\in[-R,R]} \Phi_\lambda(f,g).
\]
Therefore for any $\lambda\in\Lambda_n(f,g)$,
\[
\operatorname{dist}(0,\partial \Tt_n(f,g))
\le
\operatorname{dist}(0,\partial \Phi_\lambda(f,g)).
\]
Moreover, by the chain rule for subdifferentials,
\begin{equation}\label{eq:Phi_subdiff_equals_K_subdiff}
\partial \Phi_\lambda(f,g) = \partial_{(f,g)}\Kk_n(f+\lambda,g-\lambda).
\end{equation}

Now pick
\[
\lambda_{n,\infty}\in\Lambda_n(\bar f_\infty^\star,\bar g_\infty^\star)
=
\arg\min_{\lambda\in[-R,R]}\Kk_n(\bar f_\infty^\star+\lambda,\bar g_\infty^\star-\lambda),
\]
and obtain
\begin{equation}\label{eq:dist_Tn_to_dist_Kn_at_lambda_n}
\operatorname{dist}\bigl(0,\partial \Tt_n(\bar f_\infty^\star,\bar g_\infty^\star)\bigr)
\le
\operatorname{dist}\Bigl(0,\partial_{(f,g)}\Kk_n(\bar f_\infty^\star+\lambda_{n,\infty},\bar g_\infty^\star-\lambda_{n,\infty})\Bigr).
\end{equation}

Given any measurable selections
\[
\psi_{1,n}\in -\partial\varphi_1^*\!\bigl(-(\bar f_\infty^\star+\lambda_{n,\infty})\bigr),
\qquad
\psi_{2,n}\in -\partial\varphi_2^*\!\bigl(-(\bar g_\infty^\star-\lambda_{n,\infty})\bigr),
\]
define
\begin{align}
q_{1,n}(\cdot)
&\coloneqq
\psi_{1,n}(\cdot)+\int p_\infty^\star(\cdot,y)\,d\beta^n(y),
\label{eq:q1_random}\\
q_{2,n}(\cdot)
&\coloneqq
\psi_{2,n}(\cdot)+\int p_\infty^\star(x,\cdot)\,d\alpha^n(x).
\label{eq:q2_random}
\end{align}
By direct inspection of $\Kk_n$ and \eqref{eq:Phi_subdiff_equals_K_subdiff}, we have
\[
(q_{1,n},q_{2,n})\in
\partial_{(f,g)}\Kk_n(\bar f_\infty^\star+\lambda_{n,\infty},\bar g_\infty^\star-\lambda_{n,\infty}),
\]
hence
\begin{equation}\label{eq:dist_by_choice}
\operatorname{dist}\Bigl(0,\partial_{(f,g)}\Kk_n(\bar f_\infty^\star+\lambda_{n,\infty},\bar g_\infty^\star-\lambda_{n,\infty})\Bigr)
\le
\|(q_{1,n},q_{2,n})\|_{\alpha^n,\beta^n}.
\end{equation}

It remains to bound $\|(q_{1,n},q_{2,n})\|_{\alpha^n,\beta^n}$ for some appropriate choice of $\psi_{1,n}$ and $\psi_{2,n}$.
Let $\lambda_\infty\in[-R,R]$ be a population minimizer of the translation problem:
\[
\lambda_\infty\in\argmin_{\lambda\in[-R,R]} \Kk(f_\infty^\star+\lambda,g_\infty^\star-\lambda).
\]
Pick corresponding KKT selections
\[
\psi_1^\star \in -\partial\varphi_1^\ast(-f_\infty^\star-\lambda_\infty),
\qquad
\psi_2^\star \in -\partial\varphi_2^\ast(-g_\infty^\star+\lambda_\infty),
\]
satisfying
\begin{align}
\psi_1^\star(\cdot)+\int p_\infty^\star(\cdot,y)\,d\beta(y)=0\quad \alpha\text{-a.e.},\label{eq:stat_f}\\
\psi_2^\star(\cdot)+\int p_\infty^\star(x,\cdot)\,d\alpha(x)=0\quad \beta\text{-a.e.}\label{eq:stat_g}.
\end{align}
Define the residuals
\begin{align}
\tilde q_{1}(\cdot)
&\coloneqq
\psi_1^\star(\cdot)+\int p_\infty^\star(\cdot,y)\,d\beta^n(y)
=
\int p_\infty^\star(\cdot,y)\,d(\beta^n-\beta)(y),
\label{eq:q1_det}\\
\tilde q_{2}(\cdot)
&\coloneqq
\psi_2^\star(\cdot)+\int p_\infty^\star(x,\cdot)\,d\alpha^n(x)
=
\int p_\infty^\star(x,\cdot)\,d(\alpha^n-\alpha)(x).
\label{eq:q2_det}
\end{align}
By Proposition~\ref{ConcentrationOfGradientProp} (applied to the bounded kernel $p_\infty^\star$), for all $t>0$,
with probability at least $1-e^{-t}$,
\begin{equation}\label{eq:q_det_conc}
\| \tilde q_{1}\|_{L^2(\alpha^n)}+\|\tilde q_{2}\|_{L^2(\beta^n)}
\;\lesssim\;
\exp\!\Big(\frac{2M+\|c\|_\infty}{\eta}\Big)
\sqrt{\frac{m_\alpha m_\beta(\log n+t)}{n}}.
\end{equation}

Finally we compare $(q_{1,n},q_{2,n})$ to $(\tilde q_1,\tilde q_2)$.
This is the only place where Assumption~\ref{ass:strong_conv} is used.
Indeed, Assumption~\ref{ass:strong_conv} (ii) 
implies that the scalar map
\[
G_n:\lambda\mapsto \Kk_n( f_\infty^\star+\lambda, g_\infty^\star-\lambda)
\]
is $\gamma$--strongly convex on $[-R,R]$ for a deterministic $\gamma>0$, depending only on the local curvature properties of $\phi_1^*$ and $\phi_2^*$. It has $\lambda_{n,\infty}+\kappa_n$ as a minimizer. By \eqref{eq:kappa_conc} and Proposition~\ref{prop:lambda_stability}, with probability at least $1-e^{-t}$,
\begin{equation}\label{eq:lambda_rate}
|\lambda_{n,\infty}-\lambda_\infty|
\;\lesssim\;
\exp\!\Big(\frac{2M+\|c\|_\infty}{\eta}\Big)
\sqrt{\frac{t}{n}}.
\end{equation}

By Assumption~\ref{ass:strong_conv}(iii), for each $i\in\{1,2\}$ we can choose measurable
$\psi_{i,n}$ and $\psi_i^\star$ so that
\[
\psi_{1,n}(x)\in -\partial\varphi_1^*\!\bigl(-(\bar f_\infty^\star(x)+\lambda_{n,\infty})\bigr),
\quad
\psi_1^\star(x)\in -\partial\varphi_1^*\!\bigl(-(f_\infty^\star(x)+\lambda_\infty)\bigr),
\]
(and similarly for $i=2$), and such that pointwise
\[
|\psi_{i,n}-\psi_i^\star|\le L_i\bigl(|\lambda_{n,\infty}-\lambda_\infty|+|\kappa_n|\bigr).
\]
Consequently,
\[
\|\psi_{1,n}-\psi_1^\star\|_{L^2(\alpha^n)}+\|\psi_{2,n}-\psi_2^\star\|_{L^2(\beta^n)}
\;\lesssim\;
|\lambda_{n,\infty}-\lambda_\infty| + |\kappa_n|.
\]

Combining this with \eqref{eq:q1_random}--\eqref{eq:q2_random}, \eqref{eq:q_det_conc}, \eqref{eq:lambda_rate}
and \eqref{eq:kappa_conc} yields, with probability at least $1-e^{-t}$,
\begin{equation}\label{eq:q_random_conc}
\| q_{1,n}\|_{L^2(\alpha^n)}+\|q_{2,n}\|_{L^2(\beta^n)}
\;\lesssim\;
\exp\!\Big(\frac{2M+\|c\|_\infty}{\eta}\Big)
\sqrt{\frac{m_\alpha m_\beta(\log n+t)}{n}}.
\end{equation}
Plugging \eqref{eq:q_random_conc} into \eqref{eq:dist_by_choice} and then into \eqref{eq:dist_Tn_to_dist_Kn_at_lambda_n} gives
\begin{equation}\label{eq:dist_Tn_final}
\operatorname{dist}\bigl(0,\partial \Tt_n(\bar f_\infty^\star,\bar g_\infty^\star)\bigr)
\;\lesssim\;
\exp\!\Big(\frac{2M+\|c\|_\infty}{\eta}\Big)
\sqrt{\frac{m_\alpha m_\beta(\log n+t)}{n}}.
\end{equation}

Insert the dual error bound into \eqref{eq:density_reduce_final}, and then combine with \eqref{eq:term1_conc}
and \eqref{eq:tri_decomp_final} to obtain \eqref{eq:samp-comp-forward}.

\end{proof}

\section{\texorpdfstring{\gabriel{Conclusion}}{Conclusion}}

\gabriel{We have proved finite-sample guarantees for entropic unbalanced
optimal transport at the level of the optimal coupling. The main technical
point is to analyze the unbalanced dual problem through a
translation-invariant envelope. Once the intrinsic dual variables are anchored
and the remaining scalar translation is controlled, the dual geometry yields
compactness and strong convexity estimates that are robust across a broad
class of $\varphi$-divergences. These estimates convert empirical deviations
of the dual objective into high-probability control of the potentials and,
through the exponential representation of the optimizer, of the transport plan
itself.

This analysis reinforces the practical role of regularization in OT. Entropy
is not only an algorithmic device for Sinkhorn-type scaling; it also improves
the statistical stability of transport estimators and helps reduce
high-dimensional sample requirements. Unbalanced marginal penalties add the
modeling flexibility needed when mass is not conserved. Together, these two
forms of regularization produce the forward stability estimate needed in the
companion inverse-problem analysis~\cite{andrade2025learningfromsamples},
where the goal is to recover the cost from samples. Natural extensions include
sharper dependence on the entropy parameter, noncompact domains, richer
nonparametric cost classes, and algorithms that exploit the quotient geometry
directly.}

\bibliography{refs}
\bibliographystyle{abbrv}

\appendix
\section{Concentration bounds}\label{sec:concentration_bounds}

\begin{prop}[McDiarmid's inequality]\label{MCdiardmidProp}
Let $Z_1,\ldots,Z_n$ be independent random variables taking values in a measurable space $\mathcal Z$,
and let $F:\mathcal Z^n\to \mathbb R$ satisfy the bounded-differences property: there exists $c>0$ such that
for all $i\in\{1,\ldots,n\}$ and all $z_1,\ldots,z_n,z_i'\in\mathcal Z$,
\[
\big|F(z_1,\ldots,z_i,\ldots,z_n)-F(z_1,\ldots,z_i',\ldots,z_n)\big|\le c.
\]
Then for all $t>0$,
\[
\PP\Big(\big|F(Z_1,\ldots,Z_n)-\EE[F(Z_1,\ldots,Z_n)]\big|\ge t\Big)
\le 2\exp\!\Big(-\frac{2t^2}{nc^2}\Big).
\]
\end{prop}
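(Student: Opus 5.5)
The plan is to reduce the bounded-differences hypothesis to a martingale with bounded increments and then apply the Azuma--Hoeffding argument, which rests on Hoeffding's lemma. Set $F=F(Z_1,\ldots,Z_n)$ and let $\mathcal F_k=\sigma(Z_1,\ldots,Z_k)$. Define the Doob martingale $V_k=\EE[F\mid\mathcal F_k]$, so that $V_0=\EE F$ and $V_n=F$. Write $F-\EE F=\sum_{k=1}^n D_k$ with $D_k=V_k-V_{k-1}$.

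The main step is to show that, conditionally on $\mathcal F_{k-1}$, each $D_k$ lies in an interval of length at most $c$. By independence, $V_k=\int F(Z_1,\ldots,Z_k,z_{k+1},\ldots,z_n)\,dP_{k+1}\cdots dP_n$. Set
\[
U_k=\sup_{z}\EE[F\mid Z_1,\ldots,Z_{k-1},Z_k=z]-V_{k-1}
\]
and let $L_k$ be the corresponding infimum. Then $L_k\le D_k\le U_k$, both bounds are $\mathcal F_{k-1}$-measurable, and the bounded-differences property integrated over the remaining coordinates gives $U_k-L_k\le c$. This step needs care with measurability of the sup and inf. If that becomes an issue, I would instead work with $\tilde D_k$ obtained by swapping $Z_k$ for an independent copy, or simply note that only $\mathcal F_{k-1}$-measurable bounds with $U_k-L_k\le c$ are needed, and pointwise versions suffice.

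Next I apply Hoeffding's lemma conditionally. Since $\EE[D_k\mid\mathcal F_{k-1}]=0$ and $D_k$ lies in an interval of length at most $c$, we get $\EE[e^{sD_k}\mid\mathcal F_{k-1}]\le e^{s^2c^2/8}$. Iterating the tower property then yields $\EE e^{s(F-\EE F)}\le e^{ns^2c^2/8}$.

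Finally I use the Chernoff bound $\PP(F-\EE F\ge t)\le e^{-st+ns^2c^2/8}$ and optimize at $s=4t/(nc^2)$, which gives $e^{-2t^2/(nc^2)}$. Applying the same argument to $-F$ and taking a union bound produces the factor $2$. I expect the only delicate point to be the conditional range bound in the main step; everything else is standard.
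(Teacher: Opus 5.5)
The paper gives no proof of this proposition; it quotes McDiarmid's inequality as a standard tool. Your Doob-martingale/Azuma--Hoeffding argument is the standard proof and it is correct. The constants check out: Hoeffding's lemma gives $e^{s^2c^2/8}$ per increment, the choice $s=4t/(nc^2)$ yields exactly $e^{-2t^2/(nc^2)}$, and applying the same bound to $-F$ with a union bound gives the factor $2$. Note also that the bounded-differences property forces $F$ to have oscillation at most $nc$, so $F$ is bounded and all conditional expectations are finite once $F$ is measurable.

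One caution concerns your fallback for the measurability issue. Replacing $Z_k$ by an independent copy only gives $|D_k|\le c$, an interval of length $2c$. That weakens the exponent to $t^2/(2nc^2)$, so it would not prove the statement with the stated constant.

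The correct fix is the pointwise one you also mention. Write $g_k(z_1,\ldots,z_k)=\int F(z_1,\ldots,z_n)\,dP_{k+1}\cdots dP_n$ and fix a prefix $(z_1,\ldots,z_{k-1})$. Then the map $z\mapsto g_k(z_1,\ldots,z_{k-1},z)-\int g_k(z_1,\ldots,z_{k-1},\cdot)\,dP_k$ has $P_k$-mean zero. It also takes values in an interval whose endpoints are deterministic numbers and whose length is at most $c$. Hoeffding's lemma therefore applies with no measurability requirement on the supremum or infimum. Fubini's theorem on the product measure then does the job of the tower property when you iterate over $k$.
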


\begin{lemma}[Hoeffding bound for the empirical product measure]\label{UstatisticsLemma}
Let $(\alpha,\beta)\in \mathcal{M}^+(\mathcal{X})\times \mathcal{M}^+(\mathcal{Y})$ and let
$(z_i,w_i)_{i=1}^n$ be i.i.d.\ with common law $\xi$, where $\xi_1=\alpha/m_\alpha$ and $\xi_2=\beta/m_\beta$.
Let $h\in L^\infty(\alpha\otimes\beta)$ satisfy $\langle h,\alpha\otimes\beta\rangle=0$ and set
\[
\alpha^n=\frac{m_\alpha}{n}\sum_{i=1}^n\delta_{z_i},
\qquad
\beta^n=\frac{m_\beta}{n}\sum_{j=1}^n\delta_{w_j}.
\]
Then for all $t>0$, with probability at least $1-e^{-t}$,
\begin{equation}\label{eq:Ustat_hoeffding}
\big|\langle h,\alpha^n\otimes\beta^n\rangle\big|
\;\le\;
m_\alpha m_\beta\,\|h\|_{L^\infty(\alpha\otimes\beta)}\,
\sqrt{\frac{t}{2n}}.
\end{equation}
In particular, $\big|\langle h,\alpha^n\otimes\beta^n\rangle\big|
\lesssim m_\alpha m_\beta\|h\|_\infty \sqrt{t/n}$.
\end{lemma}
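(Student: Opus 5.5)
The plan is to obtain \eqref{eq:Ustat_hoeffding} by a Chernoff bound combined with Hoeffding's averaging trick for $U$-statistics, rather than McDiarmid. McDiarmid's inequality (Proposition~\ref{MCdiardmidProp}) only gives a bounded-difference constant of order $4\|h\|_\infty/n$, which loses a factor in front of $\sqrt{t/(2n)}$.

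\textbf{Step 1: normalise.} Write
\[
F\coloneqq\langle h,\alpha^n\otimes\beta^n\rangle=\frac{m_\alpha m_\beta}{n^2}\sum_{i,j=1}^n h(z_i,w_j),
\]
so it suffices to bound $V\coloneqq n^{-2}\sum_{i,j}h(z_i,w_j)$ by $\|h\|_\infty\sqrt{t/(2n)}$. Every summand satisfies $|h(z_i,w_j)|\le\|h\|_{L^\infty(\alpha\otimes\beta)}$ almost surely, because $z_i\sim\alpha/m_\alpha$ and $w_j\sim\beta/m_\beta$.

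\textbf{Step 2: centring.} Off-diagonal summands $h(z_i,w_j)$ with $i\neq j$ involve independent coordinates. Hence they have law $\xi_1\otimes\xi_2$ and mean $\langle h,\alpha\otimes\beta\rangle/(m_\alpha m_\beta)=0$. The $n$ diagonal summands $h(z_i,w_i)$ have mean $\EE_\xi h$. This mean vanishes in the product case $\xi=\frac{1}{m_\alpha m_\beta}\alpha\otimes\beta$, and I will treat that case as the one in which the stated constant is exact.

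\textbf{Step 3: Hoeffding's averaging representation.} I will write $V$ as an average, over the $n$ cyclic shifts $k\in\{0,\dots,n-1\}$, of
\[
S_k\coloneqq\frac1n\sum_{i=1}^n h(z_i,w_{i+k\bmod n}),
\]
so that $V=\frac1n\sum_k S_k$. Convexity of $x\mapsto e^{sx}$ then gives
\[
\EE e^{sV}\le\frac1n\sum_k\EE e^{sS_k}.
\]
For each fixed $k$, $S_k$ is a normalised sum of $n$ bounded, mean-zero terms, each taking values in an interval of length at most $2\|h\|_\infty$.

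\textbf{Step 4: moment generating function of each $S_k$.} I would show $\EE e^{sS_k}\le\exp(s^2\|h\|_\infty^2/(2n))$ by Hoeffding's lemma. For $k=0$ this is direct, since the $(z_i,w_i)$ are independent. For $k\neq0$, the map $i\mapsto(z_i,w_{i+k})$ is a cyclic chain: neighbouring terms share a sample. Here I would pass to a coupling in which the $w$'s are replaced by an independent copy; this is legitimate in the product case because then $(z_i)$ and $(w_j)$ are independent families. After that replacement every $S_k$ is a sum of $n$ independent bounded terms.

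\textbf{Step 5: Chernoff bound.} Optimising $s$ in $\PP(V\ge u)\le e^{-su}\EE e^{sV}$ gives $\PP(V\ge u)\le\exp(-2nu^2/\|h\|_\infty^2)$ when the range constant is taken as $\|h\|_\infty$. Setting $u=\|h\|_\infty\sqrt{t/(2n)}$ yields probability $e^{-t}$, and the same argument applied to $-h$ handles the lower tail.

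\textbf{Main obstacle.} I expect the constant bookkeeping in Steps 2 and 4 to be the hard part. First, the dependence across shifts must be removed. Second, a two-sided bound with range $2\|h\|_\infty$ naturally gives $\|h\|_\infty\sqrt{2t/n}$ at probability $1-2e^{-t}$. Matching $\sqrt{t/(2n)}$ at probability $1-e^{-t}$ therefore requires reading the constant as a one-sided bound per tail with $\|h\|_\infty$ as the half-range. In the non-product case $\xi\neq\frac{1}{m_\alpha m_\beta}\alpha\otimes\beta$, the diagonal bias $\EE_\xi h/n$ has to be absorbed as an additional term of order $1/n$. This is consistent with the $\log n$ loss that Theorem~\ref{SampleComplextiyuOTTHeorem} allows in that case.
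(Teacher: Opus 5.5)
\textbf{Gap: general couplings $\xi$.} Your Step~4 only works when $\xi=\frac{1}{m_\alpha m_\beta}\alpha\otimes\beta$. The lemma, however, is stated for an arbitrary coupling $\xi$, and it is used that way for \eqref{eq:term1_conc} in the proof of Theorem~\ref{SampleComplextiyuOTTHeorem}.

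For general $\xi$ and $k\neq0$, the summand $h(z_i,w_{i+k})$ of $S_k$ shares the pair $(z_{i+k},w_{i+k})$ with $h(z_{i+k},w_{i+2k})$. It also shares the pair $(z_i,w_i)$ with $h(z_{i-k},w_i)$. So the terms of $S_k$ are dependent, and replacing the $w$'s by an independent copy changes the law of $V$. Your remark about absorbing the diagonal bias $\EE_\xi h/n$ concerns only the mean of $S_0$; it does nothing about this dependence. Conversely, in the product case no coupling is needed: all $2n$ coordinates are independent, so each $S_k$ is already an average of $n$ independent centred terms. Also, your Step~1 bound on $h(z_i,w_i)$ does not follow from the marginals. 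It needs $\xi\ll\alpha\otimes\beta$ or a sup-norm on $h$, and the same caveat affects the paper's diagonal bound.

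The paper handles general $\xi$ as follows. It peels off the diagonal, bounded deterministically by $\|h\|_\infty/n$. It then views $\frac{1}{n(n-1)}\sum_{i\neq j}h(z_i,w_j)$ as an order-two $U$-statistic of the i.i.d.\ pairs $X_i=(z_i,w_i)$. The kernel $(X_a,X_b)\mapsto h(z_a,w_b)$ has mean $\int h\,d(\xi_1\otimes\xi_2)=0$ for every $\xi$. Hoeffding's $U$-statistic inequality averages over arrangements of $\lfloor n/2\rfloor$ disjoint index pairs, and this absorbs the within-pair dependence.

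A shorter repair is the McDiarmid inequality you set aside (Proposition~\ref{MCdiardmidProp}). $V$ is a function of the independent $X_i$ with bounded differences at most $4\|h\|_\infty/n$. Hence $|V-\EE V|\le\|h\|_\infty\sqrt{8(t+\log 2)/n}$ with probability at least $1-e^{-t}$ for every $\xi$, and $|\EE V|=|\EE_\xi h|/n$.

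\textbf{Constants.} Step~5 does not follow from Step~4. The bound $\EE e^{sV}\le\exp(s^2\|h\|_\infty^2/(2n))$ yields $\PP(V\ge u)\le\exp(-nu^2/(2\|h\|_\infty^2))$, not $\exp(-2nu^2/\|h\|_\infty^2)$. A centred variable bounded by $\|h\|_\infty$ ranges over an interval of length $2\|h\|_\infty$, and no rereading of the constant changes that.

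In fact \eqref{eq:Ustat_hoeffding} with its exact constant is false, so it should not be your target. Take $\alpha=\beta=\frac12(\delta_0+\delta_1)$, product sampling, and $h(x,y)=1-2x$. Then $\langle h,\alpha^n\otimes\beta^n\rangle$ is an average of $n$ independent Rademacher signs. For $n=1$ and $t=1$ it never lies in $[-\sqrt{1/2},\sqrt{1/2}]$. For $t=2$ and large $n$ it exceeds $\sqrt{t/(2n)}$ with probability tending to $\PP(|N(0,1)|>1)\approx0.32>e^{-2}$.

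The paper's own proof only reaches the $\lesssim$ form: it ends with $2\|h\|_\infty\sqrt{t/(2n)}$ and absorbs constants. The $n=1$ example shows that even this needs an additive $\log 2$ or a restriction such as $t\ge1$.

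With the corrected exponent, your shift argument gives, in the product case, $|V|\le\|h\|_\infty\sqrt{2(t+\log 2)/n}$ with probability at least $1-e^{-t}$. There is no diagonal remainder, and the effective sample size is $n$ rather than $\lfloor n/2\rfloor$, so this is sharper than the paper's split. Finally, the $\log n$ in Theorem~\ref{SampleComplextiyuOTTHeorem} comes from the union bound in Proposition~\ref{ConcentrationOfGradientProp}, not from the diagonal bias here.
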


\begin{proof}
Write
\[
\langle h,\alpha^n\otimes\beta^n\rangle
=
\frac{m_\alpha m_\beta}{n^2}\sum_{i,j=1}^n h(z_i,w_j)
=
m_\alpha m_\beta\,U_n,
\qquad
U_n\coloneqq \frac{1}{n^2}\sum_{i,j=1}^n h(z_i,w_j).
\]
Since $(z_i,w_i)$ are i.i.d.\ with marginals $\xi_1=\alpha/m_\alpha$ and $\xi_2=\beta/m_\beta$, we have
\[
\EE[h(z_i,w_j)]=\int h\,d(\xi_1\otimes\xi_2)
=\frac{1}{m_\alpha m_\beta}\int h\,d(\alpha\otimes\beta)=0,
\]
hence $\EE[U_n]=0$.

Let $U_n^\mathrm{off}\coloneqq \frac{1}{n(n-1)}\sum_{i\neq j} h(z_i,w_j)$ be the order-2 U-statistic
(with kernel $(z,w')\mapsto h(z,w')$). Then $U_n$ differs from $U_n^\mathrm{off}$ only by the diagonal terms:
\[
U_n
=
\frac{n(n-1)}{n^2}\,U_n^\mathrm{off}
+
\frac{1}{n^2}\sum_{i=1}^n h(z_i,w_i).
\]
Therefore, using $|h|\le \|h\|_\infty$,
\begin{equation}\label{eq:diag_split}
|U_n|
\le
|U_n^\mathrm{off}|+\frac{1}{n}\,\|h\|_\infty.
\end{equation}

Now apply Hoeffding's inequality,
since the kernel takes values in $[-\|h\|_\infty,\|h\|_\infty]$, for all $t>0$,
\[
\PP\Big(|U_n^\mathrm{off}-\EE[U_n^\mathrm{off}]|\ge \|h\|_\infty\sqrt{\frac{t}{2n}}\Big)\le e^{-t}.
\]
Moreover $\EE[U_n^\mathrm{off}]=\EE[h(z_1,w_2)]=0$, so with probability at least $1-e^{-t}$,
\[
|U_n^\mathrm{off}|\le \|h\|_\infty\sqrt{\frac{t}{2n}}.
\]
Combining with \eqref{eq:diag_split} yields, on the same event,
\[
|U_n|
\le
\|h\|_\infty\sqrt{\frac{t}{2n}}+\frac{1}{n}\|h\|_\infty
\le
2 \|h\|_\infty\sqrt{\frac{t}{2n}},
\]
where $t\ge 1$ would imply $n^{-1}\le \sqrt{t/(2n)}$ and otherwise, we simply absorb constants.
Multiplying by $m_\alpha m_\beta$ gives \eqref{eq:Ustat_hoeffding}.
\end{proof}

\begin{prop}[Concentration of the empirical dual subgradients]
\label{ConcentrationOfGradientProp}
Let $q_1,q_2$ be defined by \eqref{eq:q1_det}--\eqref{eq:q2_det}, and assume that
\[
\|f_\infty^\star\|_\infty\le r,\qquad \|g_\infty^\star\|_\infty\le r
\]
for some $r>0$ (e.g.\ $r=M+R$ from Theorem~\ref{thm:restrictCompactSets}).
Set
\[
B \;\coloneqq\; B_{\exp}(r)\;\coloneqq\;\exp\!\Big(\frac{2r+\|c\|_\infty}{\eta}\Big).
\]
Then for all $t>0$, with probability at least $1-e^{-t}$,
\begin{equation}\label{eq:q12_general_tight}
\|q_1\|_{L^2(\alpha^n)}+\|q_2\|_{L^2(\beta^n)}
\lesssim B\sqrt{\frac{m_\alpha m_\beta(\log n+t)}{n}}
\end{equation}
 with an absolute implicit constant.

If moreover $\xi=\frac{1}{m_\alpha m_\beta}\alpha\otimes\beta$ (independent sampling of $z$ and $w$),
then for all $t>0$, with probability at least $1-e^{-t}$,
\begin{equation}\label{eq:q12_ind_tight}
\|q_1\|_{L^2(\alpha^n)}+\|q_2\|_{L^2(\beta^n)}
\ \le\
2B\Big(
m_\beta\sqrt{\frac{m_\alpha t}{n}}
+
m_\alpha\sqrt{\frac{m_\beta t}{n}}
\Big).
\end{equation}
\end{prop}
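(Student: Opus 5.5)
We need to prove Proposition~\ref{ConcentrationOfGradientProp}: concentration of
\[
q_1(x)=\int p_\infty^\star(x,y)\,d(\beta^n-\beta)(y),\qquad q_2(y)=\int p_\infty^\star(x,y)\,d(\alpha^n-\alpha)(x),
\]
measured in $L^2(\alpha^n)$ and $L^2(\beta^n)$ respectively.

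\textbf{Uniform kernel bound.} First, $\|p_\infty^\star\|_\infty\le B$ by \eqref{eq:p_infty_sup_bd} with $r$ in place of $M$. We treat $q_1$; $q_2$ is symmetric. Writing $\xi_2=\beta/m_\beta$, we have
\[
q_1(x)=m_\beta\Big(\tfrac1n\sum_j p_\infty^\star(x,w_j)-\EE_{w\sim\xi_2}\,p_\infty^\star(x,w)\Big),
\qquad
\|q_1\|_{L^2(\alpha^n)}^2=\frac{m_\alpha}{n}\sum_i q_1(z_i)^2 .
\]

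\textbf{Independent case.} Here $\xi=\xi_1\otimes\xi_2$, and the $w_j$ are independent of the $z_i$. I would bound $F(w_1,\dots,w_n)\coloneqq\|q_1\|_{L^2(\alpha^n)}$ conditionally on $(z_i)$. The map $w\mapsto \frac1n p_\infty^\star(\cdot,w)$ enters linearly, and $F$ is a seminorm of that linear expression. Hence changing one $w_j$ alters $F$ by at most $\frac{2B m_\beta}{n}\sqrt{m_\alpha}$, and McDiarmid (Proposition~\ref{MCdiardmidProp}) gives fluctuations of order $Bm_\beta\sqrt{m_\alpha t/n}$.

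For the mean, Jensen gives
\[
\EE F\le\big(\EE F^2\big)^{1/2}=\Big(\tfrac{m_\alpha}{n}\sum_i \EE\, q_1(z_i)^2\Big)^{1/2}.
\]
For fixed $z_i$, $q_1(z_i)$ is $m_\beta$ times a centered average of $n$ i.i.d.\ variables bounded by $B$, so $\EE\, q_1(z_i)^2\le m_\beta^2B^2/n$. Thus $\EE F\le Bm_\beta\sqrt{m_\alpha/n}$. Adding the two contributions, and doing the same for $q_2$, gives \eqref{eq:q12_ind_tight} after a union bound (replace $t$ by $t+\log 2$, absorbed into the factor $2$, or adjust constants).

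\textbf{General coupled sampling.} This is the main obstacle: $w_i$ is dependent on $z_i$, so conditioning on $(z_i)$ breaks independence of the $w_j$. I would split off the diagonal. For each fixed $i$,
\[
q_1(z_i)=m_\beta\Big(\tfrac1n\sum_{j\ne i}\big(p_\infty^\star(z_i,w_j)-\EE_{w\sim\xi_2}\,p_\infty^\star(z_i,w)\big)\Big)+O\!\Big(\frac{Bm_\beta}{n}\Big).
\]
Conditionally on $(z_i,w_i)$, the pairs $(z_j,w_j)_{j\neq i}$ are i.i.d.\ and independent of $z_i$, with $w_j\sim\xi_2$ marginally. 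Hoeffding then yields $|q_1(z_i)|\lesssim Bm_\beta\sqrt{(t+\log n)/n}$ with probability $\ge 1-e^{-t}/n$.

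A union bound over $i=1,\dots,n$ (this is the source of the $\log n$) gives, simultaneously for all $i$, $|q_1(z_i)|\lesssim Bm_\beta\sqrt{(\log n+t)/n}$. Hence
\[
\|q_1\|_{L^2(\alpha^n)}\le\sqrt{m_\alpha}\,\max_i|q_1(z_i)|\lesssim Bm_\beta\sqrt{m_\alpha(\log n+t)/n},
\]
and symmetrically for $q_2$. Combining and absorbing the mass factors $m_\beta\sqrt{m_\alpha}+m_\alpha\sqrt{m_\beta}$ into the form $\sqrt{m_\alpha m_\beta}$ times mass-dependent constants gives \eqref{eq:q12_general_tight}. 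The implicit constant there should be read as allowing the mass dependence; the mass normalization is the one point I would double-check.
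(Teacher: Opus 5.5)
Your proof is correct and follows the paper's route. The general-coupling case is exactly the paper's Step~2: split off the diagonal term, apply Hoeffding conditionally on $z_i$, take a union bound over $i$ (which produces the $\log n$), then use $\|q_1\|_{L^2(\alpha^n)}\le\sqrt{m_\alpha}\max_i|q_1(z_i)|$. In the independent case, your McDiarmid-plus-Jensen argument is the standard proof of the vector Hoeffding inequality that the paper cites, and it handles the $\sqrt{m_\alpha/n}$ normalization more cleanly than the paper's intermediate display.

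Your doubt about the masses is justified. The paper's proof also ends with $B\,(m_\beta\sqrt{m_\alpha}+m_\alpha\sqrt{m_\beta})\sqrt{(\log n+t)/n}=B\sqrt{m_\alpha m_\beta}\,(\sqrt{m_\alpha}+\sqrt{m_\beta})\sqrt{(\log n+t)/n}$, so the ``absolute'' constant in \eqref{eq:q12_general_tight} implicitly assumes bounded masses.
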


\begin{proof}
We prove the bound for $q_1$; the proof for $q_2$ is identical with $(\alpha,z_i,\beta,m_\beta)$
replaced by $(\beta,w_i,\alpha,m_\alpha)$.

\medskip
\noindent\textbf{Step 1: Centering and envelope bounds.}
Recall
\[
q_1(z)=\psi_1^\star(z)+\int p_\infty^\star(z,y)\,d\beta^n(y),
\qquad
p_\infty^\star(x,y)=\exp\!\Big(\frac{f_\infty^\star(x)+g_\infty^\star(y)-c(x,y)}{\eta}\Big).
\]
Population optimality gives the stationarity condition (for $\alpha$-a.e.\ $z$)
\[
\psi_1^\star(z)+\int p_\infty^\star(z,y)\,d\beta(y)=0.
\]
Hence, for $\alpha$-a.e.\ $z$,
\begin{equation}\label{eq:q1_centered_tight}
q_1(z)=\int p_\infty^\star(z,y)\,d(\beta^n-\beta)(y).
\end{equation}
Moreover, the box bound on $(f_\infty^\star,g_\infty^\star)$ implies
\begin{equation}\label{eq:pinf_sup_tight}
\|p_\infty^\star\|_{L^\infty(\Xx\times\Yy)}\le B.
\end{equation}

\medskip
\noindent\textbf{Step 2 (general coupling):}
Fix $i\in\{1,\ldots,n\}$. Using \eqref{eq:q1_centered_tight} at $z=z_i$ and writing
$\beta^n=\frac{m_\beta}{n}\sum_{j=1}^n\delta_{w_j}$, we have
\[
q_1(z_i)=\frac{m_\beta}{n}\sum_{j=1}^n\Big(p_\infty^\star(z_i,w_j)-\EE_{W\sim \beta/m_\beta}\big[p_\infty^\star(z_i,W)\big]\Big).
\]
Split the sum as $j\neq i$ plus $j=i$: $q_1(z_i)=A_i+B_i,$
\begin{equation}\label{eq:q1_split}
A_i\coloneqq\frac{m_\beta}{n}\sum_{j\neq i}\Big(p_\infty^\star(z_i,w_j)-\EE_{W\sim \beta/m_\beta}[p_\infty^\star(z_i,W)]\Big),
\quad
B_i\coloneqq\frac{m_\beta}{n}\Big(p_\infty^\star(z_i,w_i)-\EE_{W\sim \beta/m_\beta}[p_\infty^\star(z_i,W)]\Big).
\end{equation}
By \eqref{eq:pinf_sup_tight}, $|B_i|\le \frac{2m_\beta}{n}B$ deterministically.

Now condition on $z_i$. The random variables $(w_j)_{j\neq i}$ are independent (they are functions of
independent pairs $(z_j,w_j)$), and for each $j\neq i$ the marginal of $w_j$ is $\beta/m_\beta$
(because the marginal of $\xi$ is fixed). Therefore, conditional on $z_i$, the summands in $A_i$
are independent, centered, and each lies in $\big[-2B,2B\big]$.
Hoeffding's inequality yields: for all $s>0$,
\[
\PP\big(|A_i|\ge s \,\big|\, z_i\big)
\le
2\exp\!\Big(-\frac{2s^2}{(n-1)\,(2m_\beta B/n)^2}\Big)
\le
2\exp\!\Big(-\frac{n s^2}{2m_\beta^2 B^2}\Big).
\]
Consequently, for all $s>0$,
\[
\PP\big(|q_1(z_i)|\ge s+\tfrac{2m_\beta}{n}B\big)
\le
2\exp\!\Big(-\frac{n s^2}{2m_\beta^2 B^2}\Big).
\]
Apply a union bound over $i=1,\ldots,n$ and choose
\[
s\coloneqq m_\beta B\sqrt{\frac{2(\log(2n)+t)}{n}}.
\]
Then, with probability at least $1-e^{-t}$,
\[
\max_{1\le i\le n}|q_1(z_i)|
\le
m_\beta B\sqrt{\frac{2(\log(2n)+t)}{n}}+\frac{2m_\beta}{n}B.
\]
Therefore,
\begin{align}
\|q_1\|_{L^2(\alpha^n)}
&=
\sqrt{\frac{m_\alpha}{n}\sum_{i=1}^n q_1(z_i)^2}
\ \le\
\sqrt{m_\alpha}\,\max_i |q_1(z_i)| \nonumber\\
&\le
\sqrt{2}\,m_\beta B\sqrt{\frac{m_\alpha(\log(2n)+t)}{n}}
+\frac{2m_\beta\sqrt{m_\alpha}}{n}B.
\label{eq:q1_general_tight}
\end{align}

\medskip
\noindent\textbf{Step 3 (independent sampling):}
Assume now $\xi=\alpha\otimes\beta/(m_\alpha m_\beta)$, so $(w_j)_{j=1}^n$ are i.i.d.\ with law $\beta/m_\beta$
and independent of $z_{1:n}$.
Define
\[
X_j \coloneqq \Big(p_\infty^\star(z_i,w_j)-\EE_{W\sim \beta/m_\beta}[p_\infty^\star(z_i,W)]\Big)_{i=1}^n\in\RR^n.
\]
Then $\EE[X_j\mid z_{1:n}]=0$ and $\|X_j\|_\infty\le 2B$ by \eqref{eq:pinf_sup_tight}. Moreover,
\[
(q_1(z_i))_{i=1}^n=\frac{m_\beta}{n}\sum_{j=1}^n X_j.
\]
A standard vector Bernstein/Hoeffding inequality (e.g.\ \cite[Lemma~17]{rigollet2022sample}) implies:
with probability at least $1-e^{-t}$,
\[
\Big\|\frac{1}{n}\sum_{j=1}^n X_j\Big\|_2
\le
2B\sqrt{\frac{t}{n}}.
\]
Multiplying by $m_\beta$ and then by $\sqrt{m_\alpha/n}$ yields
\begin{equation}\label{eq:q1_ind_tight}
\|q_1\|_{L^2(\alpha^n)}
=
\sqrt{\frac{m_\alpha}{n}}\Big\|\frac{m_\beta}{n}\sum_{j=1}^n X_j\Big\|_2
\le
2\,m_\beta B\,\sqrt{\frac{m_\alpha t}{n}}.
\end{equation}

\medskip
\noindent\textbf{Step 4: Bound $q_2$ and sum.}
The same argument gives
\[
\|q_2\|_{L^2(\beta^n)}
\le
\sqrt{2}\,m_\alpha B\sqrt{\frac{m_\beta(\log(2n)+t)}{n}}
+\frac{2m_\alpha\sqrt{m_\beta}}{n}B
\quad\text{(general coupling),}
\]
and
\[
\|q_2\|_{L^2(\beta^n)}
\le
2\,m_\alpha B\,\sqrt{\frac{m_\beta t}{n}}
\quad\text{(independent sampling).}
\]
Summing with \eqref{eq:q1_general_tight} gives \eqref{eq:q12_general_tight}, and summing with
\eqref{eq:q1_ind_tight} gives \eqref{eq:q12_ind_tight}.
\end{proof}


\end{document}